\documentclass{amsart}
\usepackage{amsmath}

\newtheorem{theorem}{Theorem}[section]
\newtheorem{lemma}[theorem]{Lemma}
\newtheorem{prop}[theorem]{Proposition}
\theoremstyle{definition}

\newtheorem{ass}{\textbf{Assumption}}

\theoremstyle{remark}

\numberwithin{equation}{section}

%    Absolute value notation

%    Blank box placeholder for figures (to avoid requiring any
%    particular graphics capabilities for printing this document).
 \allowdisplaybreaks[4]
\begin{document}

\title{Discretization Error of Stochastic Iterated Integrals}

%    Information for first author
%    Information for second author
%    Information for second author

\author{Yuping Song}
\address{Department  of Mathematics, Zhejiang University, Hangzhou, PRC.}
\email{songyuping@shnu.edu.cn}
\thanks{Yuping Song was supported by the General Research Fund of Shanghai Normal
University (No. SK201720).}

\author{Hanchao Wang }
\address{Zhongtai Securities Institute for Financial Studies,  Shandong University,  Jinan,  250100, PRC.}
\email{wanghanchao@sdu.edu.cn}
\thanks{Hanchao Wang is the corresponding author, he  was supported by National Natural Science Foundation of China (No. 11371317, 11526205,  11626247), and  the Fundamental Research Fund of Shandong University (No. 2016GN019). }

\subjclass[2010]{60F05, 60H05, 60J60}

\date{}

\keywords{Stochastic iterated integrals; L\'{e}vy processes; semimartingale
with jumps; the Dol\'{e}ans-Dade exponential; rate of convergence.}

\begin{abstract}
 In this paper, the weak convergence about the discretization error
of stochastic iterated  integrals in the Skorohod sense are studied,
 while the integrands and integrators of iterated integrals are supposed to be
semimartingales with jumps.  We explored the rate of
convergence of its approximation based on the asymptotic behaviors of the
associated normalized error and obtained that the rate is $1/n$ when
the driving process is semimartingale with a nonvanishing continuous
martingale component. As an application, we also studied the discretization
of  the Dol\'{e}ans-Dade exponential.

  \end{abstract}

\maketitle
\section{Introduction}
Let $(\Omega, \mathcal{F}, (\mathcal{F}_{t})_{t\in [0,1]}, \mathbb{P})$ be a stochastic basis. The present paper studies the weak convergence of a sequence of stochastic processes $X^n=\{X^n_t\}_{t\in [0,1]}$ defined as
\begin{equation}
\label{m}
  X_t^n=\sum_{i=1}^{[nt]}\int_{\frac{i-1}{n}}^{\frac{i}{n}}\int_{\frac{i-1}{n}}^{s-}(Y_{r-}-Y_{\frac{i-1}{n}})dY_rdY_s\end{equation}
in the Skorohod space $\mathbb{D}[0,1]$, where $Y=\{Y_t\}_{t\in [0,1]}$ is semimartingales with jumps on  $(\Omega, \mathcal{F}, (\mathcal{F}_{t})_{t\in [0,1]}, \mathbb{P})$. The stochastic iterated integral $\int_{0}^{t}\int_{0}^{s-}Y_{r-}dY_rdY_s$   is usually defined as a limit of Riemann sums via discretization of $Y$.   We are interested in the asymptotic error distributions of $X$, i.e. the weak convergence of $X$ in   Skorohod space $\mathbb{D}[0,1]$.

Solving the problem can be seen as an extension of previous works on the discretization error of
                                    \begin{equation}
\label{mm}
  \Upsilon_t^n=\sum_{i=1}^{[nt]}\int_{\frac{i-1}{n}}^{\frac{i}{n}}(S_{s-}-S_{\frac{i-1}{n}})dY_s.\end{equation}
   Rootz\'en  \cite{r} studied the  the weak convergence of $\Upsilon$,   when $Y$ is a Brownian motion.   Jacod and Protter \cite{jp} obtained the weak convergence of $\Upsilon$ and asymptotic error distributions of Euler scheme for stochastic differential equation,  when $S=Y$ are semimartingale. Jacod \cite{j} extended \cite{jp} to the pure jump L\'evy processes.  Wang \cite{w} extended Jacod's work to pure jump semimartingales.  In  \cite{jp}, the rate of convergence is  $1/\sqrt{n}$ when the driving process is semimartingale with a nonvanishing continuous martingale component.  The rates of convergence in \cite{j} and \cite{w}  depended  on the concentration of L\'evy measure of underlying processes.   Hayashi and Mykland \cite{hm} discussed this problem  in the financial content for hedging error  when $Y$ is continuous local martingale.  Tankov and Voltchkova \cite{tv} obtained the asymptotic distribution of  hedging error when $Y$ is semimartingale with jumps.  On the other hand, some authors recently consider the random discretization scheme for (\ref{mm}), see \cite{f}, \cite{lr}, \cite{zs} and so on, these papers  on  random discretization scheme   are confined to the case that $Y$ is continuous local martingales.

     So far, the convergence of $X^n$ in (\ref{m}) is rarely studied.  Yan \cite{y} studied this problem when $Y$ is continuous semimartingale,  derived the asymptotic error distribution  of  Milstein scheme for stochastic differential equation, obtained that  the rate of convergence is  $1/n$ when the driving process is continuous semimartingale with a nonvanishing martingale component.   In present work, we focus on the case of $Y$ is semimartingale with jump.

   As an application, we studied the discretization error of Dol\'{e}ans-Dade exponential.  Dol\'{e}ans-Dade exponential plays an important role in the study of weak convergence for semimartingales and has explicit computation of density process. It can be defined through
            \begin{equation}
\label{dde}
    dX_t=X_{t-}dY_t,
\end{equation}
 $X$ is called the  Dol\'{e}ans-Dade exponential  of $Y$.  If  $Y$ is a semimartingale with jumps,  then
          $$X_t=:\mathcal{E}(Y)_t=\exp\{Y_t-Y_0-\frac{1}{2}<Y^c>_t\}\prod_{s\le t}(1+\Delta Y_s)e^{-\Delta Y_s}$$
   where $Y^c$ is continuous local martingale part of $Y$, $<Y^c>$ stands for its  predictable quadratic variation.   If we conduct a  Milstein type scheme for (\ref{dde}), we obtain its asymptotic error  distribution . It can be held as asymptotic error  distribution
of  the Dol\'{e}ans-Dade exponential.

   This paper is organized as follows. We express the main result in Section 2.  In Section 3, some technical lemmas and the proof of main result are presented. The application and discuss will be collected in Section 4.

   \section{Main result}
  A semimartingale $M$ is an {\em It\^{o} semimartingale }on some filtered space $(\Omega,
\mathcal {F}, (\mathcal {F}_{t})_{t\ge 0}, \mathbb{P})$ if  its
characteristics $(B^{M},C^{M},\nu^{M})$ are absolutely continuous
with respect to Lebesgue measure. In other words, the
characteristics of $M$ have the form
\begin{equation}B_{t}^{M}=\int_{0}^{t}b_{s}^{M}ds,~~C_{t}^{M}=\int_{0}^{t}c_{s}^{M}ds,~~\nu^{M}(dt,dx)=dtF_{t}^{M}(dx).
\label{spochr}\end{equation} Here $b^{M}$ and $c^{M}$ are optional
processes, with $c^{M}\ge 0$, and $F^{M}$ is an optional random
measure on $\mathbb{R}$. The triple $(b_{t}^{M}, c_{t}^{M},
F^{M}_{t})$ constitutes the {\em spot characteristics} of $M$. The
details of these concepts and notions can be found in Jacod and
Shiryaev \cite{js}.
 \begin{ass}\label{a1} We assume that $Y$ in (\ref{m}) is stochastic integral driven by a  L\'{e}vy process, that is
                $$Y_t=\int_{0}^{t}\sigma_{s-}dZ_s$$
 where

 (a) $Z$ is L\'{e}vy process with characteristics $(b,c,F)$, where $b\in \mathbb{R}$, $c>0$ and $F$ is a positive measure on $\mathbb{R}$ with $F(\{0\})=0$, $F(\mathbb R)<\infty$ and$\int (x^2\wedge 1)F(dx)<\infty$.

(b) The process $\sigma$ is an It\^{o} semimartingale with spot
characteristics $(b_{t}^{\sigma},c_{t}^{\sigma},F_{t}^{\sigma})$,
which are such that the processes $b_{t}^{\sigma},c_{t}^{\sigma}$
and $\int (x^{2}\wedge 1)F_{t}^{\sigma}(dx)$ are locally bounded.
 \end{ass}

    In fact, if  we denote by $\mu$ the jump random measure of $Z$, and  set $\nu(dt,dx)=dt\times F(dx)$,  $Z$ has the form (see \cite{js})
       $$Z_t=bt+Z_{t}^{c}+x1_{\{|x|\le 1\}}*(\mu-\nu)_t+x1_{\{|x|>1\}}*\mu.$$
   The limiting process in our main results is described in the following.

Set\\
1. $M$ is standard Brownian motion;\\
2. $(N_n')_{n\ge 1}$ and $(N_n'')_{n\ge 1}$ are two sequences of standard normal variables;\\
3. $(K_n')_{n\ge 1}$ and $(K_n'')_{n\ge 1}$ are two sequences of identical distributed random variables, and $K_n$ has same distribution with $N_n^2-1$;\\
4.  $(\xi_n)_{n\ge 1}$ is a sequence of uniform variables on $(0,1)$.

$M$,  $(N_n')_{n\ge 1}$,  $(N_n'')_{n\ge 1}$,  $(K_n')_{n\ge 1}$, $(K_n'')_{n\ge 1}$ and  $(\xi_n)_{n\ge 1}$ are independent from each other and all other random elements.

 Let us denote by $(T_{n})_{n\ge 1}$  arbitrary ordering of all jump times of $Z$,  consisting of stopping times taking value in $(0,1]$.

Now, we present our main results.
 \begin{theorem} \label{mr1}

 Under Assumption \ref{a1}, we have  the following:

 (a) If $c=0$, $nX^n$ weakly converge to 0.

 (b) If $c>0$, $nX^n$ weakly converge to $X$, where
  \begin{eqnarray*}
                 X_s =\frac{\sqrt{6c^3}}{6}\int_0^s\sigma_{t-}^{3}dM_t-c\sum_{n:T_n\le t}[\sqrt{\xi_n}K_{n}'+\sqrt{c(1-\xi_n)\xi_n}N_n'N_{n}''+\sqrt{1-\xi_n}K_{n}'')]\sigma_{T_{n}-}^{3}\Delta Z_{T_n}\end{eqnarray*}

\end{theorem}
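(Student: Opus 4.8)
The plan is to reduce the statement to a stable limit theorem for triangular arrays of double stochastic integrals, treating separately the contribution of the continuous martingale part of $Y$ and the contribution of its jumps, and exploiting that Assumption \ref{a1}(a) forces $F(\mathbb{R})<\infty$, so that $Z$ (hence $Y$) has only finitely many jumps on $[0,1]$ almost surely. By the usual localization procedure one may assume that $\sigma$, the processes $b^\sigma_t,c^\sigma_t,\int(x^2\wedge1)F^\sigma_t(dx)$, and the number of jumps of $Z$ on $[0,1]$ are all bounded; this is harmless since Skorohod convergence is a local property and the announced limit is unaffected. Writing $t_i=i/n$ and $\Delta^n_iX=\int_{t_{i-1}}^{t_i}\int_{t_{i-1}}^{s-}(Y_{r-}-Y_{t_{i-1}})\,dY_r\,dY_s$, I would split $\{1,\dots,[nt]\}$ into indices $i$ for which $[t_{i-1},t_i]$ contains no jump of $Z$ and those containing exactly one; for $n$ large every jump is isolated in its own cell, so cells with two or more jumps contribute nothing for $n$ large.

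First I would analyze the jump-free cells. There $Y$ is continuous and, after freezing $\sigma$ at $\sigma_{t_{i-1}}$ and replacing $Z$ by its Brownian part $\sqrt{c}\,W$ up to terms that are negligible after normalization by $n$, the cell contribution is the third-order iterated integral of a Brownian increment. A direct It\^o computation gives $E[(\Delta^n_iX)^2]\approx \tfrac{c^3}{6}\,\sigma_{t_{i-1}}^6\,n^{-3}$, so that $n\sum_i\Delta^n_iX$ over jump-free cells is a sum of conditionally centered, conditionally Gaussian martingale increments whose bracket converges to $\tfrac{c^3}{6}\int_0^s\sigma_{t-}^{6}\,dt$. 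A martingale central limit theorem (stable convergence, as in Jacod and Shiryaev \cite{js}) then yields the continuous limit $\tfrac{\sqrt{6c^3}}{6}\int_0^s\sigma_{t-}^{3}\,dM_t$, with $M$ a Brownian motion on an independent extension of the space; the constant $\tfrac1{\sqrt6}$ is exactly the standard deviation of the normalized Brownian triple integral.

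Next I would treat a cell containing a single jump at $T_n=t_{i-1}+\xi_n/n$. Decomposing $Y=Y^c+Y^J$ inside the cell and collecting the terms of $\Delta^n_iX$ linear in $\Delta Y_{T_n}=\sigma_{T_n-}\Delta Z_{T_n}$, one isolates three pieces: the interaction of the pre-jump continuous increment with itself, that of the post-jump continuous increment with itself, and the cross interaction of the two continuous increments through the jump. Each pre-/post-increment is an independent Brownian increment over a sub-cell of relative length $\xi_n$ (respectively $1-\xi_n$); the two squared-increment pieces produce variables with the law of $N^2-1$, the cross piece produces a product $N'N''$ of independent normals, and the position $\xi_n$ becomes uniform in the limit. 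Scaling by $n$ and passing to the limit, these combine into the announced series over the jump times $T_n$, with factors $\sqrt{\xi_n}$, $\sqrt{c(1-\xi_n)\xi_n}$ and $\sqrt{1-\xi_n}$ and coefficient $c\,\sigma_{T_n-}^{3}\Delta Z_{T_n}$. Because the jumps are finite in number and asymptotically isolated, these contributions are asymptotically independent of the continuous part and of one another.

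Finally I would assemble the two limits. The main obstacle is establishing the \emph{joint} stable convergence rather than the two marginal statements: one must show that the Brownian contribution and the jump-indexed contribution converge jointly, on a common extension of $(\Omega,\mathcal F,\mathbb P)$, to mutually independent limits, which requires controlling the cross terms between jump-free and single-jump cells and verifying a Lindeberg and tightness condition uniformly in $n$. Part (a), the case $c=0$, then follows as the degenerate specialization: with no continuous martingale component all the Brownian increments above vanish, the jump-free cells carry only the drift (of order $n^{-3}$ per cell) and the single-jump cells carry only drift--jump interactions (of order $n^{-2}$), so that $n\sum_i\Delta^n_iX\to0$ in probability, hence weakly.
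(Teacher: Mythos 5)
Your proposal is correct in outline and its analytic core is the same as the paper's: the continuous part is handled by a stable limit theorem for triangular arrays with conditional bracket $\tfrac{c^3}{6}\int_0^t\sigma_s^6\,ds$ (Lemma \ref{3.5}, plus the computation of $\sum_i\mathbb{E}_{i-1}[\tilde q_i^{\,2}]$ in Subsection 3.4), and the jumps contribute exactly your three pieces --- pre-jump squared increment, post-jump squared increment, and the cross product --- with asymptotically uniform jump position (Lemmas \ref{3.6} and \ref{3.7}). The genuine difference is organizational: you partition the discretization cells by jump content, using $F(\mathbb{R})<\infty$ so that for large $n$ every jump sits alone in its own cell, and you thereby dispense with any jump-size truncation; the paper instead decomposes the process, $Z=Z^c+M^\varepsilon+A^\varepsilon+b_\varepsilon t$, proves convergence for the $W$-part and for the $Z^c$--$A^\varepsilon$ cross terms, and removes the compensated-small-jump remainder through the double limit $\lim_{\varepsilon\to0}\limsup_n$ of Lemma \ref{3.8}. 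Your route buys a shorter argument that never needs Lemma \ref{3.8}, at the price of being tied to finite activity; the paper's truncation is precisely the layer of machinery designed to be robust to small jumps and is the template one would start from in the infinite-activity case. Two places where your sketch leans on work the paper actually carries out: the joint stable convergence, with independence, of the Gaussian and jump-indexed limits --- which you rightly flag as the main obstacle --- is obtained in the paper by checking the orthogonality condition (\ref{def7}) in step (4) of Lemma \ref{3.5} and by the conditioning/density argument of Lemma \ref{3.6} borrowed from Jacod and Protter; and your freezing of $\sigma$ at the left endpoint of each cell is only justified by the It\^o-semimartingale modulus estimate (\ref{cc}), which is exactly what the paper uses to kill the error terms $\epsilon_{1,n},\dots,\epsilon_{4,n}$ when it extends Theorem \ref{mr3} ($\sigma\equiv1$) to general $\sigma$. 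A complete write-up of your version would have to reproduce equivalents of both of these steps.
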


 \section{Preliminaries and the proof of Theorem \ref{mr1}}

 \subsection{Localization}

We first  reduce the problem to a situation where $Y$
satisfies some strengthened versions of our assumptions, which are
as follows.

\begin{ass} \label{a2} We have Assumption \ref{a1}, and moreover

(a) we have $|\Delta Z_{t}|$ and  $|\sigma_{t}|$ are  bounded for
all $t\in[0,1]$;

(b)the processes $|b_{t}^{\sigma}|$, $c_{t}^{\sigma}$ and $\int (x^{2}\wedge
1)F_{t}^{\sigma}(dx)$are bounded.

\end{ass}

\begin{lemma} \label{local}In Theorem \ref{mr1}, one can replace Assumption 1   with Assumption \ref{a2} . \end{lemma}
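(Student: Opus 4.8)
The plan is to use the standard localization procedure for limit theorems of semimartingales, as in Jacod and Protter \cite{jp} or Jacod and Shiryaev \cite{js}: I exhibit, for each integer $m\ge 1$, a modified process $Y^{(m)}$ that satisfies Assumption \ref{a2} and coincides with $Y$ up to a stopping time $S_m$, where the $S_m$ exhaust $[0,1]$ with probability tending to one. Concretely, under Assumption \ref{a1} the processes $b^{\sigma}$, $c^{\sigma}$ and $\int(x^2\wedge 1)F^{\sigma}_t(dx)$ are locally bounded, $\sigma$ is c\`adl\`ag (hence locally bounded), and $Z$ has a finite L\'evy measure so that it has only finitely many jumps on $[0,1]$. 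Thus the stopping time
$$S_m=\inf\Big\{t\ge 0: |\sigma_t|\vee|\sigma_{t-}|>m \text{ or } |b^{\sigma}_t|\vee c^{\sigma}_t\vee\int(x^2\wedge 1)F^{\sigma}_t(dx)>m \text{ or } |\Delta Z_t|>m\Big\}$$
satisfies $\mathbb{P}(S_m\le 1)\to 0$ as $m\to\infty$. I then set $\sigma^{(m)}_t=\sigma_t 1_{\{t<S_m\}}+\sigma_{S_m-}1_{\{t\ge S_m\}}$, freezing $\sigma$ at its left limit (so $|\sigma^{(m)}|\le m$), and let $Z^{(m)}$ be the L\'evy process obtained by truncating the jumps of $Z$ to $[-m,m]$, which agrees with $Z$ before $S_m$. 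Putting $Y^{(m)}=\int_0^{\cdot}\sigma^{(m)}_{s-}dZ^{(m)}_s$ produces a process whose spot characteristics vanish after $S_m$ and are bounded by a constant depending on $m$ before $S_m$, so $Y^{(m)}$ satisfies Assumption \ref{a2}, and $Y^{(m)}=Y$ on $[0,S_m)$.

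The second ingredient is that both the prelimit and the limit are \emph{local} functionals of $Y$. For the prelimit, $X^n_t$ in (\ref{m}) is obtained by summing, over $i\le[nt]$, double integrals over the blocks $[\tfrac{i-1}{n},\tfrac{i}{n}]$, so $X^n_t$ depends on $Y$ only through its restriction to $[0,t]$; consequently $nX^n_t(Y)=nX^n_t(Y^{(m)})$ for every $t<S_m$, and the two c\`adl\`ag paths coincide on all of $[0,1]$ on the event $\{S_m>1\}$. For the limit, the process $X$ in Theorem \ref{mr1}(b) is built from $\int_0^{\cdot}\sigma^3_{t-}dM_t$ and from a sum over the jump times $T_n$ of the quantities $\sigma^3_{T_n-}\Delta Z_{T_n}$ weighted by the external variables $M,\xi_n,K_n',K_n'',N_n',N_n''$. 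Since $\sigma$, the jump times $T_n$, and the jump sizes $\Delta Z_{T_n}$ occurring before $S_m$ are unchanged by the modification, and the external variables are independent of everything, the limit built from $Y^{(m)}$ also coincides with the one built from $Y$ on $[0,S_m)$, hence on $[0,1]$ on $\{S_m>1\}$.

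Granting that Theorem \ref{mr1} holds under Assumption \ref{a2}, so that $nX^n(Y^{(m)})\Rightarrow X(Y^{(m)})$ for each fixed $m$, the conclusion follows by a routine approximation. For any bounded uniformly continuous $G$ on $\mathbb{D}[0,1]$ I split
$$\big|\mathbb{E}G(nX^n(Y))-\mathbb{E}G(X(Y))\big|\le\big|\mathbb{E}[G(nX^n(Y))-G(nX^n(Y^{(m)}))]\big|+\big|\mathbb{E}G(nX^n(Y^{(m)}))-\mathbb{E}G(X(Y^{(m)}))\big|+\big|\mathbb{E}[G(X(Y^{(m)}))-G(X(Y))]\big|.$$
On $\{S_m>1\}$ the integrands in the first and third terms vanish by the locality just described, so these terms are at most $2\|G\|_\infty\,\mathbb{P}(S_m\le 1)$, while the middle term tends to $0$ as $n\to\infty$ by the result under Assumption \ref{a2}. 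Letting $n\to\infty$ and then $m\to\infty$ yields $nX^n(Y)\Rightarrow X(Y)$, as desired.

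The step I expect to be most delicate is the construction of $Y^{(m)}$ that \emph{exactly} meets every boundedness requirement of Assumption \ref{a2} while agreeing with $Y$ up to $S_m$: the freezing at the left limit must be arranged so the jump of $\sigma$ at $S_m$ does not reintroduce an unbounded value, and one must confirm that truncating the jumps of $Z$ preserves its L\'evy-driven It\^o structure of the required form. Verifying that $X$ is a genuinely local functional near the boundary $t=S_m$, so the coincidence on $\{S_m>1\}$ is exact rather than merely approximate, also requires a little care but presents no essential difficulty.
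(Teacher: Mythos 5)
Your proposal is correct and follows essentially the same route as the paper's own proof: truncate the large jumps of $Z$, stop/freeze $\sigma$ via the stopping times furnished by local boundedness, observe that both the prelimit $nX^n$ and the limit $X$ are local functionals that agree with their modified versions before the stopping time, and conclude with the triangle inequality for bounded continuous test functions, the error terms being controlled by $\mathbb{P}(S_m\le 1)\to 0$. The only differences are cosmetic refinements (freezing $\sigma$ at its left limit rather than stopping it, and spelling out why the hitting times exhaust $[0,1]$), which if anything make the argument slightly more careful than the paper's.
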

\begin{proof}

We suppose Theorem  \ref{mr1} holds under the Assumptions \ref{a2},  we need to prove that Theorem \ref{mr1} still holds under Assumption \ref{a1}.

Consider the new L\'{e}vy process $Z(p)_{t}=Z_{t}-\sum_{s\le
t}\Delta Z_{s}1_{\{|\Delta Z_{s}|\ge p\}}$. Association with $Z(p)$
the same term as $Z$ , write  $Y(p)$, $X(p)$ instead of $ Y, X^{n}$,
respectively.

By hypothesis, we have a sequence of stopping times $(\tau_{p})_{p\ge 1}$, and a sequence of non-random time $(t_{p})_{p\ge 1}$, with $\tau_{p}\le t_{p}$, and $\tau_{p}\uparrow\infty$ as $p\rightarrow\infty$ such that
     $$|b_{t}^{\sigma}|\le p, c_{t}^{\sigma}\le p, |\sigma_{t}|\le p, \int(x^{2}\wedge 1)F_{t}^{\sigma}(dx)\le p, |\Delta Z|\le p$$
when $t\le \tau_{p}$.

Thus, set $\sigma(p)_{t}=\sigma_{t\wedge \tau_{p}}$, $Y(p)_{t}=\int_{0}^{t}\sigma(p)_{s-}dZ(p)_{s}$. We easily obtain
      $$t\le \tau_{p}\Rightarrow \sigma(p)_{t}=\sigma, Z(p)_{t}=Z,  Y(p)=Y.$$
Our hypothesis now implied $nX^{n}(p)$ converges in law to $X(p)$ with $\sigma(p)$ instead of $\sigma$.
We see the restriction of $X$ to $[0,\tau_{p})$ is a version of the restriction of $X(p)$ to $[0,\tau_{p})$.

For any continuous bounded function $\Phi_{t}$ on the Skorokhod space $\mathbb{D}([0,1],\mathbb{R})$ which depends on the sample path only up to time $t$, we have
   $$|\mathbb{E}(\Phi_{t}(nX^{n}))-\mathbb{E}(\Phi_{t}(nX^{n}(p)))|\le 2||\Phi_{t}||\mathbb{P}(\tau_{p}\le t),$$
$$|\mathbb{E}(\Phi_{t}( X))-\mathbb{E}(\Phi_{t}(X(p)))|\le 2||\Phi_{t}||\mathbb{P}(\tau_{p}\le t).$$
Since $\mathbb{P}(\tau_{p}\le t)\rightarrow 0$ and
         $$\mathbb{E}(\Phi_{t}(nX^{n}(p)))\rightarrow \mathbb{E}(\Phi_{t}(X(p)))$$
for every $t$ as $p\rightarrow\infty$, we obtain this lemma. \end{proof}

\subsection{Preliminaries}
 In this paper, we will be dealing with the weak convergence of stochastic integral  in the Skorohod topology.  We denote by $\Rightarrow$ the weak convergence for this topology.

 We first recall some facts on convergence of stochastic integrals,  which are from Kurtz and Protter \cite{kp}.

 First recall that, for every $\delta>0$, any semimartingale can be written as
       $$X_t=X_0+A_{X}(\delta)_t+M_{X}(\delta)_{t}+\sum_{s\le t}\Delta X_s1_{\{|\Delta X_{s}|>\delta\}}$$
      where $A_{X}(\delta)$ is a predictable process with finite variation, null at 0, $M_{X}(\delta)$ is a local martingale null at 0, and $\Delta M(\delta)_t\le \delta$.
\begin{theorem} \label{kp1}(Kurtz and Protter \cite{kp}.)     Let $X^n$ be a sequence of semimartingales, $H^n$ a sequenrce of predictable processes. If there exist a predictable process $H$, and semimartingale $X$ such that
     \begin{equation}\label{j1c}
            \sup_{t\in[0,1]}|H^n_t-H_t|\stackrel{\mathbb{P}}{\rightarrow} 0,     \end{equation}
       \begin{equation}\label{j2c}              X^n\Rightarrow X ,  \end{equation}
 and
     $$<M_{X^n}(\delta),M_{X^n}(\delta)>_1+\int_0^1|dA_{X^n}(\delta)_s|+\sum_{0\le s\le 1}X^n_s1_{\{|\Delta X^n_{s}|>\delta\}}$$
     is tight for some $\delta>0$. Then we have
         $$(X^n,H_-^n\cdot X^n)\Rightarrow (X, H_-\cdot X).$$
     \end{theorem}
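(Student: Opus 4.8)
The plan is to recognize the tightness hypothesis on
$$\langle M_{X^n}(\delta), M_{X^n}(\delta)\rangle_1 + \int_0^1 |dA_{X^n}(\delta)_s| + \sum_{0 \le s \le 1} X^n_s 1_{\{|\Delta X^n_s| > \delta\}}$$
as the \emph{uniformly controlled variations} condition, which serves as the substitute for a continuity property that the stochastic integration map $(H, X) \mapsto H_- \cdot X$ fails to possess in the Skorohod topology. Since $X^n \Rightarrow X$ and $\sup_{t \in [0,1]} |H^n_t - H_t| \stackrel{\mathbb{P}}{\rightarrow} 0$, I would first upgrade these hypotheses to the joint convergence $(H^n, X^n) \Rightarrow (H, X)$ in $\mathbb{D}[0,1]^2$: the uniform control on $H^n - H$ lets me replace $H^n$ by $H$ at a cost that is uniformly negligible, after which only the weak convergence of the integrators is needed to couple the two coordinates. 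Passing to a subsequence and invoking the Skorohod representation theorem, I may then assume all these convergences hold almost surely on a common probability space, which is what makes the pathwise estimates below legitimate.

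Next I would prove tightness of the integral sequence $\{H^n_- \cdot X^n\}$. Using the decomposition $X^n = X^n_0 + A_{X^n}(\delta) + M_{X^n}(\delta) + \sum_{s \le \cdot} \Delta X^n_s 1_{\{|\Delta X^n_s| > \delta\}}$, I would treat the three pieces separately. For the martingale part, $H^n_- \cdot M_{X^n}(\delta)$ is a local martingale with predictable quadratic variation $\int_0^\cdot (H^n_{s-})^2 \, d\langle M_{X^n}(\delta)\rangle_s$; since $\sup_t |H^n_t|$ is stochastically bounded (being uniformly close to $H$) and $\langle M_{X^n}(\delta)\rangle_1$ is tight, the Burkholder--Davis--Gundy and Lenglart domination inequalities give the stochastic boundedness that tightness requires. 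For the finite-variation part the bound $\int_0^1 |d(H^n_- \cdot A_{X^n}(\delta))_s| \le \sup_t |H^n_t| \int_0^1 |dA_{X^n}(\delta)_s|$ is tight for the same reason, and the large-jump contribution is controlled directly by the tightness of $\sum_{0 \le s \le 1} X^n_s 1_{\{|\Delta X^n_s| > \delta\}}$. Together with the already-established tightness of $X^n$ and $H^n$, Aldous's criterion then yields tightness of the triples $(X^n, H^n, H^n_- \cdot X^n)$.

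It then remains to identify every subsequential limit. Fixing a subsequence along which $(X^n, H^n, H^n_- \cdot X^n) \Rightarrow (X, H, U)$, I must show $U = H_- \cdot X$. The device is a step-process approximation: for a partition $0 = t_0 < \cdots < t_m = 1$ whose points are continuity points of the limit, set $H^{n,m}_s = \sum_k H^n_{t_{k-1}} 1_{(t_{k-1}, t_k]}(s)$. For a piecewise-constant integrand the stochastic integral reduces to the elementary sum $\sum_k H^n_{t_{k-1}}(X^n_{t_k \wedge \cdot} - X^n_{t_{k-1} \wedge \cdot})$, a continuous functional of $(X^n, H^n)$ sampled at finitely many points, so by the joint convergence it converges in law to $\sum_k H_{t_{k-1}}(X_{t_k \wedge \cdot} - X_{t_{k-1} \wedge \cdot})$. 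The crucial estimate is that the replacement error $(H^n - H^{n,m})_- \cdot X^n$ is, \emph{uniformly in} $n$, small in probability as the mesh shrinks: splitting $X^n$ into its three parts once more and bounding each against the uniformly controlled variations quantities and the oscillation of $H$ makes this error negligible. Letting $n \to \infty$ first and then refining the partition identifies $U = H_- \cdot X$; as the subsequence was arbitrary, the whole sequence converges and the theorem follows.

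The main obstacle I anticipate is precisely this uniform-in-$n$ control of the approximation error, which is where the entire force of the hypothesis is spent. The Skorohod topology does not make stochastic integration continuous: jumps of $X^n$ need not align with those of the limit $X$, the elementary sums behave well only when the partition avoids the random jump locations of $X$ (hence the insistence on continuity points), and the truncation level $\delta$ must be kept fixed so that large jumps are handled by the separate tight term. Controlling the martingale component of $(H^n - H^{n,m})_- \cdot X^n$ via Burkholder--Davis--Gundy against the tight $\langle M_{X^n}(\delta)\rangle$, uniformly over the partition and over $n$, is the technical heart of the argument; without the uniformly controlled variations hypothesis this step --- and with it the conclusion --- breaks down.
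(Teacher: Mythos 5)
First, a point of comparison: this paper contains no proof of Theorem \ref{kp1} at all; it is quoted (in a somewhat loose transcription) from Kurtz and Protter \cite{kp} and used as a black box, so your proposal can only be measured against the original Kurtz--Protter argument. Its broad architecture --- exploit the tightness hypothesis (their ``uniformly controlled variations'' condition) for estimates uniform in $n$, approximate the integrand by step processes, identify subsequential limits --- you have reproduced correctly, but two of your steps contain genuine gaps. The first is your opening claim that (\ref{j1c}) and (\ref{j2c}) can be ``upgraded'' to joint convergence $(H^n,X^n)\Rightarrow(H,X)$, with ``only the weak convergence of the integrators needed to couple the two coordinates.'' This is not merely unjustified; it is false, and in fact the theorem as literally stated here fails without it. Take $H$ a Brownian motion, $H^n\equiv H$, and $X^n=(-1)^nH$: then (\ref{j1c}), (\ref{j2c}) and the tightness hypothesis all hold (each $X^n$ is a Brownian motion with $\langle X^n\rangle_1=1$ and no jumps), yet $H_-\cdot X^n_t=(-1)^n(H_t^2-t)/2$ has two distinct subsequential limit laws, so $(X^n,H^n_-\cdot X^n)$ does not converge weakly. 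Weak convergence of $X^n$ alone cannot determine the joint law of the limit with the fixed process $H$; Kurtz and Protter \emph{assume} joint convergence $(H^n,X^n)\Rightarrow(H,X)$ (and in this paper's applications one in fact has stable convergence of the integrators, which does yield joint convergence with a fixed $H$). Any correct proof must take that joint convergence as a hypothesis, not derive it.

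Second, even granting joint convergence, your key uniform-in-$n$ estimate fails as formulated, because you discretize along a \emph{deterministic} partition. If $H$ has jumps --- and nothing in the statement excludes this --- then for cadlag paths $\sup_s|H^{n,m}_s-H^n_{s-}|$ does not shrink as the mesh decreases: it stays of the order of the largest jump of $H$ however fine the partition, so the tightness hypothesis cannot make $(H^n-H^{n,m})_-\cdot X^n$ small uniformly in $n$ by refining the mesh. This is precisely why Kurtz and Protter sample the integrand at the \emph{random} times $\tau^n_0=0$, $\tau^n_{k+1}=\inf\{t>\tau^n_k:|H^n_t-H^n_{\tau^n_k}|\ge\epsilon\}$, which forces the sup-norm error to be at most $\epsilon$ by construction, jumps included; the real technical work in their proof is then showing that these hitting times converge jointly with $(H^n,X^n)$ in the Skorohod topology, so that the elementary sums converge. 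Relatedly, your appeal to the Skorohod representation theorem is not innocuous: $H^n_-\cdot X^n$ is not a pathwise functional of $(H^n,X^n)$, and the representation need not preserve filtrations or predictability, so almost-sure Skorohod convergence does not by itself legitimize the subsequent manipulations; the original argument works with the estimates in distribution throughout and never needs such a representation.
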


  Next, we will recall some facts about the stable convergence.
Firstly, we will mention the definition of the stable convergence in
law and its property, secondly, we will present  limit theorem for
partial sums of triangular arrays of random variables, one can refer
to Jacod \cite{j3} \cite{j4} \cite{ja} or Jacod and Shiryaev
\cite{js} for more details.

\noindent {\textbf{1) Stable convergence in law.}}

This notation was firstly introduced by R\'{e}nyi \cite{re}, which
is exposited by Aldous and Eagleson \cite{ald}.

A sequence of random variables $Z_{n}$ defined on the probability
space $(\Omega, \mathcal {F}, \mathbb{P}),$ taking their values in
the state space $(E, \mathcal {E}),$ assumed to be Polish. We say
that $Z_{n}$ stably converges in law if there is a probability
measure $\eta$ on the product $(\Omega \times E, \mathcal {F} \times
\mathcal {E}),$ such that $\eta(A \times E) = \mathbb{P}(A)$ for all
$A \in \mathcal {F}$ and
\begin{equation}
\label{def1} \mathbb{E}(Y f(Z_{n})) \longrightarrow \int{Y(\omega)
f(x) \eta(d\omega, dx)}
\end{equation}
for all bounded continuous functions $f$ on $E$ and bounded random
variables $Y$ on $(\Omega, \mathcal {F}).$

Take $\tilde{\Omega} = \Omega \times E$, $\tilde{\mathcal {F}} =
\mathcal {F} \times \mathcal {E}$ and endow $(\tilde{\Omega},
\tilde{\mathcal {F}})$ with the probability $\eta,$ and put
$Z(\omega, x) = x,$ on the extension $(\tilde{\Omega},
\tilde{\mathcal {F}}, \tilde{\mathbb{P}})$ of $({\Omega}, {\mathcal
{F}}, {\mathbb{P}})$ with the expectation $\tilde{\mathbb{E}}$ we
have
\begin{equation}
\label{def2} \mathbb{E}(Y f(Z_{n})) \longrightarrow
\tilde{\mathbb{E}}(Y f(Z)),
\end{equation}
then we say that $Z_{n}$ converges stably to $Z,$ denoted by
$\stackrel{stably} \Rightarrow.$

The stable convergence implies the following crucial property, which
is fundamental for the proof of lemma \ref{3.5}.
\begin{prop}
\label{pro1} if $Z_{n} \stackrel{stably} \Rightarrow Z $ and if
$Y_{n}$ and $Y$ are variables defined on $(\Omega, \mathcal {F},
\mathbb{P})$ and with values in the same Polish space F, then
\begin{equation}
\label{def3} Y_{n} \stackrel{\mathbb{P}} \longrightarrow
Y~~~~~\Rightarrow~~~~~(Y_{n}, ~Z_{n}) \stackrel{stably} \Rightarrow
(Y, ~Z),
\end{equation}
which implies that $Y_{n} + Z_{n} \stackrel{stably} \Rightarrow Y +
Z$ through the continuous function $g(x, y) = x + y.$
\end{prop}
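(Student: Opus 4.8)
The plan is to verify directly the defining relation \eqref{def2} for the pair $(Y_n,Z_n)$: namely, to show that for every bounded continuous $h$ on $F\times E$ and every bounded $\mathcal F$-measurable random variable $W$ one has $\mathbb E(W\,h(Y_n,Z_n))\to\tilde{\mathbb E}(W\,h(Y,Z))$. A routine density argument first reduces this to the case where $h$ is bounded and Lipschitz for the product metric, with Lipschitz constant $L$ and sup-norm $\|h\|$, since such functions are convergence-determining. Note that to avoid a clash with the notation of \eqref{def2}, I denote the bounded test variable by $W$ rather than by $Y$.

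I would then split the difference as
$$\mathbb E(W\,h(Y_n,Z_n))-\tilde{\mathbb E}(W\,h(Y,Z))=A_n+B_n,$$
where $A_n=\mathbb E\big(W[h(Y_n,Z_n)-h(Y,Z_n)]\big)$ and $B_n=\mathbb E(W\,h(Y,Z_n))-\tilde{\mathbb E}(W\,h(Y,Z))$. The term $A_n$ is controlled by the in-probability convergence of $Y_n$: since $|h(Y_n,Z_n)-h(Y,Z_n)|\le (L\,d_F(Y_n,Y))\wedge 2\|h\|$, which tends to $0$ in probability under the hypothesis $Y_n\stackrel{\mathbb P}{\longrightarrow}Y$ and is uniformly bounded, dominated convergence gives $|A_n|\le\|W\|_\infty\,\mathbb E[(L\,d_F(Y_n,Y))\wedge 2\|h\|]\to 0$.

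The crux is the term $B_n$. Here $Y$ is fixed and $\mathcal F$-measurable, so $\Phi(\omega,z):=W(\omega)h(Y(\omega),z)$ is bounded, jointly $\mathcal F\otimes\mathcal E$-measurable, and continuous in $z$; showing $B_n\to 0$ amounts to upgrading the definition of stable convergence from product test functions $W'f(z)$ to such ``random'' test functions $\Phi(\omega,z)$. I expect this upgrade to be the main obstacle. I would prove it by approximation: for $\Phi$ of the form $\sum_j W_j f_j$ with $W_j$ bounded $\mathcal F$-measurable and $f_j$ bounded continuous on $E$, the convergence $\mathbb E(\Phi(\cdot,Z_n))\to\tilde{\mathbb E}(\Phi(\cdot,Z))$ is immediate from \eqref{def2} and linearity. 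The sequence $(Z_n)$ is tight, being stably and hence weakly convergent, so on a compact $K\subset E$ carrying all but $\varepsilon$ of the mass of every $Z_n$ and of $Z$, one approximates $z\mapsto\Phi(\omega,z)$ uniformly by finite product sums and passes to the limit, the error being uniformly $O(\varepsilon)$. Letting $\varepsilon\downarrow 0$ yields $B_n\to 0$, and therefore $(Y_n,Z_n)\stackrel{stably}{\Rightarrow}(Y,Z)$.

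Finally, the asserted consequence follows by the continuous mapping principle for stable convergence: stable convergence is preserved when a continuous map is applied to the converging variables, so composing $(Y_n,Z_n)\stackrel{stably}{\Rightarrow}(Y,Z)$ with the continuous map $g(x,y)=x+y$ gives $Y_n+Z_n=g(Y_n,Z_n)\stackrel{stably}{\Rightarrow}g(Y,Z)=Y+Z$. Here $F=E$ is taken to be a topological vector space so that the addition $g$ is indeed continuous, which is the setting in which the corollary is applied.
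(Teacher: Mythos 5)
First, a point of comparison: the paper gives \emph{no proof} of Proposition \ref{pro1} at all --- it is quoted as a known property of stable convergence, going back to R\'enyi \cite{re} and Aldous--Eagleson \cite{ald} (see also Jacod and Shiryaev \cite{js}) --- so there is no in-paper argument to measure yours against; what you have written is the standard from-scratch proof. Most of its steps are correct as stated: the reduction to bounded Lipschitz $h$ is legitimate (for fixed bounded $W\ge 0$ the set functions $A\mapsto\mathbb{E}\bigl(W1_A(Y_n,Z_n)\bigr)$ are finite measures of constant total mass $\mathbb{E}(W)$, for which bounded Lipschitz functions are convergence-determining, and general bounded $W$ follows by linearity); the decomposition into $A_n+B_n$ and the bound $|A_n|\le\|W\|_\infty\,\mathbb{E}\bigl[(L\,d_F(Y_n,Y))\wedge 2\|h\|\bigr]\to 0$ are fine; and the continuous-mapping step for $g(x,y)=x+y$, with your caveat that $F=E$ must be a vector space (here $\mathbb{R}$), is also fine.

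The step that would fail as literally written is the treatment of $B_n$. You approximate $h$ on the compact set $K$ by finite sums of products and assert ``the error being uniformly $O(\varepsilon)$''. Two problems. First, an abstract Stone--Weierstrass approximant $\sum_j g_j(y)f_j(z)$ is controlled only \emph{on} $K$; off $K$ it is merely bounded by some constant $C_\varepsilon$ depending on the approximant you chose, and nothing prevents $C_\varepsilon\uparrow\infty$ as $\varepsilon\downarrow 0$. Since each $Z_n$ may carry mass up to $\varepsilon$ outside $K$, the error you actually control is of order $C_\varepsilon\,\varepsilon$, which need not vanish, so the limit $\varepsilon\downarrow 0$ does not go through. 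Second, Stone--Weierstrass requires compactness in \emph{both} variables, and your sketch never produces a compact set in $F$ containing (most of) the range of $Y$. Both defects are repaired by building the approximant explicitly so that it is globally bounded by $\|h\|$ and exact in the $y$-variable: cover $K$ by finitely many balls of radius $\delta$ centred at points $z_i$, choose continuous functions $0\le\phi_i\le 1$ on $E$ with $\sum_i\phi_i\le 1$ everywhere, $\sum_i\phi_i=1$ on $K$, and $\phi_i$ vanishing outside the $2\delta$-ball around $z_i$, and set
\begin{equation*}
\tilde h(y,z)=\sum_i h(y,z_i)\,\phi_i(z).
\end{equation*}
Then $|\tilde h|\le\|h\|$ on all of $F\times E$, while $|h-\tilde h|\le 2L\delta$ on $F\times K$; moreover, for each $i$ the variable $W\,h(Y,z_i)$ is bounded and $\mathcal F$-measurable and $\phi_i$ is bounded continuous, so $\mathbb{E}\bigl(W\,h(Y,z_i)\phi_i(Z_n)\bigr)\to\tilde{\mathbb{E}}\bigl(W\,h(Y,z_i)\phi_i(Z)\bigr)$ is exactly the defining property \eqref{def2} of stable convergence --- no randomness enters the $z$-argument. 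This yields $\limsup_n|B_n|\le 2\|W\|_\infty\bigl(2L\delta+2\|h\|\varepsilon\bigr)$, and letting $\delta\downarrow 0$ and then $\varepsilon\downarrow 0$ gives $B_n\to 0$. With this replacement (which needs no compactification of $F$ at all), your proof is complete.
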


\noindent {\textbf{2) Convergence of triangular arrays.}}

In this part, we give the available convergence criteria for stable
convergence of partial sums of triangular arrays.

\begin{theorem}
\label{theo1} (Jacod's stable convergence theorem \cite{ja}.) A
sequence of $\mathbb{R}-$valued variables $(\zeta_{n, i}: i \geq 1)$
defined on the filtered probability space $(\Omega, \mathcal {F},
(\mathcal {F})_{t \geq 0}, \mathbb{P})$ is $\mathcal {F}_{i
\Delta_{n}}-$measurable for all $n, i.$ Assume there exists a
continuous adapted $\mathbb{R}-$valued process of finite variation
$B_{t}$ and a continuous adapted and increasing process $C_{t}$, for
any $t
> 0,$ we have
\begin{equation}
\label{def4} \sup_{0 \leq s \leq t}\big|\sum_{i =
1}^{[s/\Delta_{n}]}\mathbb{E}\big[\zeta_{n, i} | \mathcal {F}_{(i -
1)\Delta_{n}}\big] - B_{s}\big| \stackrel{\mathbb{P}} \longrightarrow 0,
\end{equation}
\begin{equation}
\label{def5} ~~~~~~~~~~~~~~~~~~~~~~~~~\sum_{i =
1}^{[t/\Delta_{n}]}\big(\mathbb{E}\big[\zeta_{n, i}^{2} | \mathcal
{F}_{(i - 1)\Delta_{n}}\big] - \mathbb{E}^{2}\big[\zeta_{n, i} |
\mathcal {F}_{(i - 1)\Delta_{n}}\big]\big) - C_{t} \stackrel{\mathbb{P}}
\longrightarrow 0,
\end{equation}
\begin{equation}
\label{def6} \sum_{i = 1}^{[t/\Delta_{n}]}\mathbb{E}\big[\zeta_{n,
i}^{4} | \mathcal {F}_{(i - 1)\Delta_{n}}\big] \stackrel{\mathbb{P}}
\longrightarrow 0.
\end{equation}
Assume also
\begin{equation}
\label{def7} \sum_{i = 1}^{[t/\Delta_{n}]}\mathbb{E}\big[\zeta_{n,
i} \Delta_{n}^{i}H | \mathcal {F}_{(i - 1)\Delta_{n}}\big]
\stackrel{\mathbb{P}} \longrightarrow 0,
\end{equation}
where either H is one of the components of Wiener process $W$ or is
any bounded martingale orthogonal (in the martingale sense) to $W$
and $\Delta_{n}^{i}H = H_{i\Delta_{n}} - H_{(i - 1)\Delta_{n}}.$

Then the processes $$\sum_{i = 1}^{[t/\Delta_{n}]}\zeta_{n, i}
\stackrel{stably} \Rightarrow B_{t} + M_{t},$$ where $M_{t}$ is a
continuous process defined on an extension $\big(\widetilde{\Omega},
\widetilde{\mathcal {F}},  \widetilde{\mathbb{P}}\big)$ of the filtered
probability space $\big({\Omega}, {\mathcal {F}}, \mathbb{P}\big)$ and
which, conditionally on the the $\sigma-$filter $\mathcal {F}$, is a
centered Gaussian $\mathbb{R}-$valued process with
$\widetilde{\mathbb{E}}\big[M_{t}^{2} | \mathcal {F}\big] = C_{t}.$
\end{theorem}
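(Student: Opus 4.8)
The plan is to reduce the statement to a functional martingale central limit theorem and then to upgrade the resulting convergence in law to stable convergence using the orthogonality hypothesis \eqref{def7}. First I would split each summand into its predictable and martingale parts, writing $\zeta_{n,i}=a_{n,i}+\chi_{n,i}$ with $a_{n,i}=\mathbb{E}[\zeta_{n,i}\mid\mathcal{F}_{(i-1)\Delta_n}]$ and $\chi_{n,i}=\zeta_{n,i}-a_{n,i}$. Hypothesis \eqref{def4} says precisely that the partial sums of the $a_{n,i}$ converge uniformly in probability to the finite-variation process $B_t$, so the drift is handled directly and it remains to analyze the martingale array $V^n_t=\sum_{i=1}^{[t/\Delta_n]}\chi_{n,i}$.

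For the martingale part I would compute its predictable quadratic variation, $\langle V^n\rangle_t=\sum_{i=1}^{[t/\Delta_n]}\mathbb{E}[\chi_{n,i}^2\mid\mathcal{F}_{(i-1)\Delta_n}]=\sum_{i=1}^{[t/\Delta_n]}\big(\mathbb{E}[\zeta_{n,i}^2\mid\mathcal{F}_{(i-1)\Delta_n}]-a_{n,i}^2\big)$, which is exactly the quantity appearing in \eqref{def5}; hence $\langle V^n\rangle_t\to C_t$ in probability. Next I would verify the conditional Lindeberg condition: from $\chi_{n,i}^4\le 8(\zeta_{n,i}^4+a_{n,i}^4)$ and the fourth-moment control \eqref{def6} (together with the boundedness of the $a_{n,i}$ that \eqref{def4} forces), the sum $\sum_i\mathbb{E}[\chi_{n,i}^4\mid\mathcal{F}_{(i-1)\Delta_n}]$ vanishes, which both kills the jumps of any limit and supplies the Lindeberg bound. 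The functional martingale central limit theorem then yields tightness of $V^n$ in $\mathbb{D}[0,1]$ and convergence of its finite-dimensional laws to those of a continuous process $M$ that is, conditionally on the limiting $\sigma$-field, centered Gaussian with variance $C_t$.

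The decisive step, and the one I expect to be the main obstacle, is promoting this to genuine stable convergence and correctly identifying the conditional law of $M$ on the extension $(\tilde\Omega,\tilde{\mathcal{F}},\tilde{\mathbb{P}})$. Here hypothesis \eqref{def7} is exactly what is needed: for every test martingale $H$ (a component of $W$ or a bounded martingale orthogonal to $W$), the predictable covariation $\langle V^n,H\rangle$ is asymptotically $\sum_i\mathbb{E}[\chi_{n,i}\,\Delta_n^i H\mid\mathcal{F}_{(i-1)\Delta_n}]$, which tends to $0$. This asymptotic orthogonality forces the Gaussian limit to be orthogonal to $W$ and to every $\mathcal{F}$-martingale orthogonal to $W$, so that $M$ carries only new randomness, independent of $\mathcal{F}$ after conditioning. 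I would then invoke the general characterization of stably convergent martingale arrays (as in Jacod \cite{ja} and Jacod and Shiryaev \cite{js}): joint convergence of $(V^n, N)$ for bounded $\mathcal{F}$-martingales $N$, together with the computed limiting quadratic variation $C_t$ and the vanishing covariation, pins down the $\tilde{\mathbb{P}}$-law of the pair uniquely and shows $\tilde{\mathbb{E}}[M_t^2\mid\mathcal{F}]=C_t$. Combining the drift limit $B_t$ from the first step with the stable martingale limit $M_t$ gives $\sum_{i=1}^{[t/\Delta_n]}\zeta_{n,i}\stackrel{stably}{\Rightarrow}B_t+M_t$, as claimed.
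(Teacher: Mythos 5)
The paper itself gives no proof of this statement: it is quoted as a known result of Jacod (reference \cite{ja}), so there is no internal argument to compare yours against. Judged on its own merits, your sketch reproduces the standard proof from the literature (Jacod and Shiryaev \cite{js}, Ch.~IX; Jacod and Protter \cite{jpb}, Ch.~2): split $\zeta_{n,i}=a_{n,i}+\chi_{n,i}$ with $a_{n,i}=\mathbb{E}[\zeta_{n,i}\mid\mathcal{F}_{(i-1)\Delta_n}]$, absorb the drift into $B$ via (\ref{def4}), identify the predictable quadratic variation of the martingale array with the quantity in (\ref{def5}), get the Lindeberg-type condition from (\ref{def6}), and use (\ref{def7}) to make the covariation with $W$ and with every bounded martingale orthogonal to $W$ vanish. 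Two remarks. First, a simplification: you do not need any boundedness of the $a_{n,i}$ extracted from (\ref{def4}) in the Lindeberg step, since Jensen's inequality gives $a_{n,i}^4\le\mathbb{E}[\zeta_{n,i}^4\mid\mathcal{F}_{(i-1)\Delta_n}]$, so (\ref{def6}) alone controls $\sum_i\mathbb{E}[\chi_{n,i}^4\mid\mathcal{F}_{(i-1)\Delta_n}]$. Second, and more substantively, your decisive step is phrased as an appeal to ``the general characterization of stably convergent martingale arrays (as in Jacod \cite{ja})'' --- but that characterization \emph{is} the theorem being proved, so as written the conclusion is circular. The non-circular way to close the argument is the one the literature actually uses: prove joint tightness of $(V^n,N)$ for $N$ equal to $W$ or to a bounded martingale orthogonal to $W$, show that every limit point $(M,N)$ consists of a continuous local martingale $M$ on an extension with $\langle M\rangle=C$, $\langle M,W\rangle=0$ and $\langle M,N\rangle=0$ (here you must also justify passing from the discrete conditional covariations in (\ref{def7}) to the bracket of the limit, a uniform-integrability argument you currently elide), and then invoke Jacod's theorem on continuous conditionally Gaussian martingales (reference \cite{j3} in this paper's bibliography) to conclude that $M$ is $\mathcal{F}$-conditionally centered Gaussian with $\widetilde{\mathbb{E}}[M_t^2\mid\mathcal{F}]=C_t$ and that the convergence is stable. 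With that substitution, your outline is the correct and standard proof.
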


\subsection{Asymptotic properties on L\'{e}vy process}

In this section, we study asymptotic properties  of

 $$S^{n}_{t}:=\sum_{i=1}^{[nt]}\int_{(i-1)/n}^{i/n}\int_{(i-1)/n}^{s-}(Z_{r-}-Z_{(i-1)/n})dZ_{r}dZ_{s}.$$

           For $\varepsilon>0$, we set
          $$M^{\varepsilon}=x1_{\{|x|\le \varepsilon\}}*(\mu-\nu),~~~~N^{\varepsilon}=x1_{\{|x|\ge \varepsilon\}}*(\mu-\nu),$$
          $$A^{\varepsilon}=x1_{\{|x|\ge \varepsilon\}}*\mu,~~~~b_{\varepsilon}=b-\int_{\varepsilon<|x|\le 1}xF(dx).$$
           Obviously,
          $$Z_t=Z_t^c+M_t^{\varepsilon}+A_t^{\varepsilon}+b_{\varepsilon}t$$
    From now on, we fix $\varepsilon>0$, and denote by $0<T_1<T_2<\cdots<T_n<\cdots$  the successive jump times of $Z$ with size bigger than $\varepsilon$.    We define
       $$T_{+}(n,i)=\inf\{\frac{k}{n}:k\ge 1, \frac{k}{n}\ge T_{i}\},~~~~T_{-}(n,i)=T_{+}(n,i)-\frac{1}{n};$$
Set
      $$\alpha_j^n=n\Delta Z_{T_j}(Z_{T_j}^{c}-Z_{T_-(n,j)}^{c})(Z_{T_+(n,j)}^{c}-Z_{T_j}^{c}),$$
      $$\beta_j^n=n\Delta Z_{T_j}[\frac{1}{2}(Z_{T_j}^{c}-Z_{T_-(n,j)}^{c})^2-\frac{1}{2}(T_j-T_-(n,j))],$$
         $$\gamma_j^n=n\Delta Z_{T_j}[\frac{1}{2}(Z_{T_+(n,j)}^{c}-Z_{T_j}^{c})^2-\frac{1}{2}(T_+(n,j)-T_j)].$$

           \begin{lemma}\label{3.5}
  If we denote $W_t=Z^c_t+b_\varepsilon t := L_{t} +
 F_{t},$
      $$M^{n,\varepsilon}_t=\sum_{i=1}^{[nt]}\int_{(i-1)/n}^{i/n}\int_{(i-1)/n}^{s-}(W_{r-}-W_{(i-1)/n})dW_{r}dW_{s}.$$
     We have
         $$nM^{n,\varepsilon}\Rightarrow M$$
 where $M$ are  continuous local martingales, which are independent of $Z^c$.

    \end{lemma}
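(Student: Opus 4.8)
The plan is to apply Jacod's stable convergence theorem (Theorem~\ref{theo1}) to the triangular array $\zeta_{n,i}$, $i\le [nt]$, defined by
$$\zeta_{n,i} = n\int_{(i-1)/n}^{i/n}\int_{(i-1)/n}^{s-}(W_{r-}-W_{(i-1)/n})\,dW_r\,dW_s,$$
so that $nM^{n,\varepsilon}_t = \sum_{i=1}^{[nt]}\zeta_{n,i}$ and $\Delta_n = 1/n$. Since $W_t = Z^c_t + b_\varepsilon t$ is continuous with $\langle W\rangle_t = \langle Z^c\rangle_t = ct$, It\^o's formula applied to $x\mapsto\tfrac12 x^2$ evaluates the inner integral explicitly,
$$\int_{(i-1)/n}^{s}(W_r - W_{(i-1)/n})\,dW_r = \tfrac12\bigl(W_s - W_{(i-1)/n}\bigr)^2 - \tfrac{c}{2}\Bigl(s - \tfrac{i-1}{n}\Bigr),$$
exhibiting each $\zeta_{n,i}$ as a single It\^o integral against $W$ of an explicit adapted integrand. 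Because $Z^c$ is the Gaussian part of a L\'evy process, $W$ has independent increments and the increment of $W$ over $[(i-1)/n, i/n]$ is independent of $\mathcal F_{(i-1)/n}$; writing $\tilde W_u := W_{(i-1)/n+u} - W_{(i-1)/n} = \sqrt c\,\tilde B_u + b_\varepsilon u$ with $\tilde B$ a standard Brownian motion, every conditional expectation reduces to an expectation over the law of $\tilde W$ on $[0,1/n]$.

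Then I would verify the hypotheses (\ref{def4})--(\ref{def7}). For (\ref{def4}), only the drift part of $dW$ survives, and with $\mathbb E[\tilde W_u^2] = cu + b_\varepsilon^2 u^2$ one gets $\mathbb E[\zeta_{n,i}\mid\mathcal F_{(i-1)/n}] = b_\varepsilon^3/(6n^2)$, so the partial sums vanish uniformly and $B=0$. For (\ref{def5}), It\^o's isometry gives $\mathbb E[\zeta_{n,i}^2\mid\mathcal F_{(i-1)/n}] = n^2 c\,\mathbb E\int_0^{1/n}\bigl(\tfrac12\tilde W_u^2 - \tfrac{c}{2}u\bigr)^2\,du + O(n^{-2})$; expanding the integrand through the Gaussian moments of $\tilde W_u$ yields $\tfrac12 c^2 u^2 + O(u^3)$, hence $\mathbb E[\zeta_{n,i}^2\mid\mathcal F_{(i-1)/n}] = c^3/(6n) + O(n^{-2})$, while $\mathbb E^2[\zeta_{n,i}\mid\mathcal F_{(i-1)/n}] = O(n^{-4})$ is negligible; summation gives $C_t = \tfrac{c^3}{6}\,t$. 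Condition (\ref{def6}) follows from the crude estimate $\mathbb E[\zeta_{n,i}^4\mid\mathcal F_{(i-1)/n}] = O(n^{-2})$, whose sum is $O(n^{-1})\to 0$.

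The delicate step is the orthogonality condition (\ref{def7}). For $H=W$ (equivalently its Brownian component $L=Z^c$), the covariance of the martingale parts is, by the isometry, $c\,\mathbb E\int_0^{1/n}\bigl(\tfrac12\tilde W_u^2 - \tfrac{c}{2}u\bigr)\,du$, which cancels to leading order since $\mathbb E[\tilde W_u^2]=cu$ in the driftless case, leaving only an $O(n^{-3})$ drift correction per term whose sum is $O(n^{-1})\to 0$. For $H$ a bounded martingale orthogonal to $W$, I would use that $\zeta_{n,i}$ is, up to the negligible drift piece, the stochastic integral $n\int U_s\,dL_s$ against $L=Z^c$ with $U_s = \tfrac12(W_s - W_{(i-1)/n})^2 - \tfrac{c}{2}(s-\tfrac{i-1}{n})$, so that $\mathbb E[\zeta_{n,i}\,\Delta_n^i H\mid\mathcal F_{(i-1)/n}]$ is controlled by $\langle L,H\rangle = 0$ and therefore vanishes. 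I expect the rigorous justification of this last vanishing---separating the $\int U_s\,dL_s$ and $b_\varepsilon\int U_s\,ds$ contributions and bounding the latter against the orthogonal increment $\Delta_n^i H$---to be the main obstacle.

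Finally, Theorem~\ref{theo1} gives $nM^{n,\varepsilon} = \sum_i\zeta_{n,i}\stackrel{stably}\Rightarrow B + M = M$, where $M$ is continuous and, conditionally on $\mathcal F$, centered Gaussian with $\widetilde{\mathbb E}[M_t^2\mid\mathcal F] = C_t = \tfrac{c^3}{6}\,t$. As this conditional variance is deterministic, $M$ is in fact an unconditional Brownian motion $M_t = \tfrac{\sqrt{6c^3}}{6}\,\tilde B_t$ independent of $\mathcal F$, in particular independent of $Z^c$, which is the asserted continuous local martingale; reassuringly, the constant agrees with the coefficient appearing in Theorem~\ref{mr1}.
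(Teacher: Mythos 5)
Your proposal is correct, and it runs on the same engine as the paper's proof: Jacod's stable convergence theorem (Theorem \ref{theo1}) applied to the triangular array of normalized iterated integrals, with the identical conditional variance $C_t=\tfrac{c^3}{6}t$ and the same two-case treatment of the orthogonality condition. The genuine difference is how the drift $F_t=b_\varepsilon t$ is handled. The paper first expands the iterated integral of $W=L+F$ into five pieces, kills the four pieces involving $F$ by $L^2$ estimates (quoting Lemma 7.7 of Yan \cite{y}), and only then applies the array theorem to the pure-martingale part $q_i=n\int\!\!\int(L_r-L_{(i-1)/n})\,dL_r\,dL_s$, for which (\ref{def4}) and (\ref{def7}) hold with exact zeros by the martingale property. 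You instead keep the drift inside the array, use It\^o's formula to collapse each summand to a single integral $n\int U_s\,dW_s$ with $U_s=\tfrac12(W_s-W_{(i-1)/n})^2-\tfrac{c}{2}\bigl(s-\tfrac{i-1}{n}\bigr)$, and carry explicit drift corrections through each of (\ref{def4})--(\ref{def7}); the independent-increments reduction to the law of $\tilde W$ then replaces the paper's conditional-expectation manipulations. Your route buys a cleaner algebraic object (one integrand instead of iterated integrals) and avoids both the five-term bookkeeping and the external citation, at the price of more delicate condition-checking; the paper's route buys exact cancellations. Two small repairs to your write-up: in (\ref{def7}) with $H=W$, restoring the factor $n$ in $\zeta_{n,i}$ makes the surviving drift correction $O(n^{-2})$ per term (not $O(n^{-3})$), consistent with your stated sum $O(n^{-1})\to 0$; and the step you flag as the main obstacle---the drift piece $nb_\varepsilon\int U_s\,ds$ against an orthogonal increment $\Delta_n^i H$---closes by Cauchy--Schwarz across the sum, since $\sum_i\mathbb{E}\bigl[(nb_\varepsilon\int U_s\,ds)^2\bigr]=O(n^{-1})$ while $\sum_i\mathbb{E}\bigl[(\Delta_n^i H)^2\bigr]$ is bounded for a bounded martingale $H$, giving a total contribution $O(n^{-1/2})\to 0$. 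With that, both arguments yield the same limit $\sqrt{c^3/6}\,\tilde B$, conditionally Gaussian with deterministic variance, hence independent of $\mathcal{F}$ and in particular of $Z^c$.
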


 \begin{proof}

 \begin{eqnarray*}
M^{n,\varepsilon}_{t} & = &
\sum_{i=1}^{[nt]}\int_{(i-1)/n}^{i/n}\int_{(i-1)/n}^{s}(W_{r}-W_{(i-1)/n})dW_{r}dW_{s}\\
& = &
\sum_{i=1}^{[nt]}\int_{(i-1)/n}^{i/n}\int_{(i-1)/n}^{s}(L_{r}-L_{(i-1)/n})dL_{r}dL_{s}
+
\sum_{i=1}^{[nt]}\int_{(i-1)/n}^{i/n}\int_{(i-1)/n}^{s}(L_{r}-L_{(i-1)/n})dL_{r}dF_{s}\\
& = &
\sum_{i=1}^{[nt]}\int_{(i-1)/n}^{i/n}\int_{(i-1)/n}^{s}(L_{r}-L_{(i-1)/n})dF_{r}dW_{s}
+
\sum_{i=1}^{[nt]}\int_{(i-1)/n}^{i/n}\int_{(i-1)/n}^{s}(F_{r}-F_{(i-1)/n})dL_{r}dW_{s}\\
& = &
\sum_{i=1}^{[nt]}\int_{(i-1)/n}^{i/n}\int_{(i-1)/n}^{s}(F_{r}-F_{(i-1)/n})dF_{r}dW_{s}\\
& := & M^{n,\varepsilon}_{1, t} + M^{n,\varepsilon}_{2, t} +
M^{n,\varepsilon}_{3, t} + M^{n,\varepsilon}_{4, t} +
M^{n,\varepsilon}_{5, t}.
\end{eqnarray*}
The results of $$n M^{n,\varepsilon}_{2, t} \stackrel {L^{2}}
\rightarrow 0,~~n M^{n,\varepsilon}_{3, t} \stackrel {L^{2}}
\rightarrow 0,~~n M^{n,\varepsilon}_{4, t} \stackrel {L^{2}}
\rightarrow 0,~~n M^{n,\varepsilon}_{5, t} \stackrel {L^{2}}
\rightarrow 0$$ are easily obtained according to lemma 7.7 in Yan
\cite{y}, so we get $$n M^{n,\varepsilon}_{2, t} \stackrel {\mathbb{P}}
\rightarrow 0,~~n M^{n,\varepsilon}_{3, t} \stackrel {\mathbb{P}} \rightarrow
0,~~n M^{n,\varepsilon}_{4, t} \stackrel {\mathbb{P}} \rightarrow 0,~~n
M^{n,\varepsilon}_{5, t} \stackrel {\mathbb{P}} \rightarrow 0.$$ Next, we
will employ Theorem \ref{theo1} to the part $n M^{n,\varepsilon}_{1,
t}.$
$$n M^{n,\varepsilon}_{1,
t} = n
\sum_{i=1}^{[nt]}\int_{(i-1)/n}^{i/n}\int_{(i-1)/n}^{s}(L_{r}-L_{(i-1)/n})dL_{r}dL_{s}
:= \sum_{i=1}^{[nt]} q_{i},$$ where $q_{i} = n
\int_{(i-1)/n}^{i/n}\int_{(i-1)/n}^{s}(L_{r}-L_{(i-1)/n})dL_{r}dL_{s}.$

\noindent {\textbf{(1)}} $\sum_{i=1}^{[nt]} \mathbb{E}_{i - 1}
[q_{i}] \equiv 0$ by the martingale property of stochastic integral.
\bigskip

\noindent {\textbf{(2)}}  Since $L_{t}$ is a Brownian motion,
we have
\begin{eqnarray*}
\sum_{i=1}^{[nt]} \mathbb{E}_{i - 1} [q^{2}_{i}] & = & n^{2}
\sum_{i=1}^{[nt]} \mathbb{E}_{i - 1}
\left[\int_{(i-1)/n}^{i/n}\int_{(i-1)/n}^{s}(L_{r}-L_{(i-1)/n})dL_{r}dL_{s}\right]^{2}\\
& = & c \cdot n^{2} \sum_{i=1}^{[nt]} \mathbb{E}_{i - 1}
\int_{(i-1)/n}^{i/n}\left[\int_{(i-1)/n}^{s}(L_{r}-L_{(i-1)/n})dL_{r}\right]^{2}ds\\
& = & c \cdot n^{2} \sum_{i=1}^{[nt]}
\int_{(i-1)/n}^{i/n}\mathbb{E}_{i -
1}\left[\int_{(i-1)/n}^{s}(L_{r}-L_{(i-1)/n})dL_{r}\right]^{2}ds\\
& = & c^{2} \cdot n^{2} \sum_{i=1}^{[nt]}
\int_{(i-1)/n}^{i/n}\int_{(i-1)/n}^{s}\mathbb{E}_{i -
1}\left[L_{r}-L_{(i-1)/n}\right]^{2}dr ds\\
& = & c^{3} \cdot n^{2} \sum_{i=1}^{[nt]}
\int_{(i-1)/n}^{i/n}\int_{(i-1)/n}^{s}\left[r - (i-1)/n\right] dr
ds\\
& = & \frac{c^{3}}{6} t,
\end{eqnarray*}
by  Fubini's Theorem in the third equation.

\noindent {\textbf{(3)}}  Using BDG and H\"{o}lder inequality, we
have
\begin{eqnarray*}
\sum_{i=1}^{[nt]} \mathbb{E}_{i - 1} [q^{4}_{i}] & = & n^{4}
\sum_{i=1}^{[nt]} \mathbb{E}_{i - 1}
\left[\int_{(i-1)/n}^{i/n}\int_{(i-1)/n}^{s}(L_{r}-L_{(i-1)/n})dL_{r}dL_{s}\right]^{4}\\
& \leq & n^{4} \sum_{i=1}^{[nt]} \mathbb{E}_{i - 1}
\left[\int_{(i-1)/n}^{i/n}\left(\int_{(i-1)/n}^{s}(L_{r}-L_{(i-1)/n})dL_{r}\right)^{2}
c_{s} ds\right]^{2}\\
& \leq & n^{4} \sum_{i=1}^{[nt]} \mathbb{E}_{i - 1}
\int_{(i-1)/n}^{i/n}\left(\int_{(i-1)/n}^{s}(L_{r}-L_{(i-1)/n})dL_{r}\right)^{4}
ds \int_{(i-1)/n}^{i/n} c^{2}_{s} ds\\
& \leq & K n^{3} \sum_{i=1}^{[nt]} \int_{(i-1)/n}^{i/n}\mathbb{E}_{i
- 1}\left(\int_{(i-1)/n}^{s}(L_{r}-L_{(i-1)/n})^{2} c_{r}
dr\right)^{2} ds\\
& \leq & K n^{3} \sum_{i=1}^{[nt]} \int_{(i-1)/n}^{i/n}\mathbb{E}_{i
- 1}\left(\int_{(i-1)/n}^{s}(L_{r}-L_{(i-1)/n})^{4} dr
\int_{(i-1)/n}^{s}c^{2}_{r} dr \right) ds\\
& \leq & K^{2} n^{2} \sum_{i=1}^{[nt]}
\int_{(i-1)/n}^{i/n}\int_{(i-1)/n}^{s}\mathbb{E}_{i -
1}(L_{r}-L_{(i-1)/n})^{4} dr ds\\
& = & O\left(\frac{1}{n}\right) \rightarrow 0.
\end{eqnarray*}

\noindent {\textbf{(4)}} If $H$ is orthogonal to $W$, we have
$$\sum_{i=1}^{[nt]} \mathbb{E}_{i -
1} [q_{i}\Delta_{i}H] \equiv 0.$$ If $H = W,$ then by the integration by parts
formula,
\begin{eqnarray*}
\sum_{i=1}^{[nt]} \mathbb{E}_{i - 1} [q_{i}\Delta_{i}H] & = & n
\sum_{i=1}^{[nt]} \mathbb{E}_{i - 1} \left[
\int_{(i-1)/n}^{i/n}\int_{(i-1)/n}^{s}(L_{r}-L_{(i-1)/n})dL_{r}dL_{s}
\int_{(i-1)/n}^{i/n} d W_{s} \right]\\
& = & n \sum_{i=1}^{[nt]} \mathbb{E}_{i -
1}\left[\int_{(i-1)/n}^{i/n}\int_{(i-1)/n}^{t}\int_{(i-1)/n}^{s}(L_{r}-L_{(i-1)/n})dL_{r}dL_{s}d
W_{t}\right]\\ & ~ & + n \sum_{i=1}^{[nt]} \mathbb{E}_{i -
1}\left[\int_{(i-1)/n}^{i/n}\int_{(i-1)/n}^{t}W_{s}ds\int_{(i-1)/n}^{t}(L_{r}-L_{(i-1)/n})dL_{r}dL_{t}\right]\\
& ~ & + n \sum_{i=1}^{[nt]} \mathbb{E}_{i -
1}\left[\int_{(i-1)/n}^{i/n}\int_{(i-1)/n}^{t}(L_{r}-L_{(i-1)/n})dL_{r}\sqrt{c}dt\right]\\
& = & 0,
\end{eqnarray*}
by the Fubini's Theorem for the third part and the martingale
property of stochastic integral.

Based on Proposition \ref{pro1} and the above calculations, we get
$$nM^{n,\varepsilon}\stackrel{stably}\Rightarrow M$$
where $M$ are  continuous local martingales, which are independent
of $Z^c$.
\end{proof}

   \begin{lemma}\label{3.6}
   We have $$(W, \alpha^n, \beta^n, \gamma^n)\stackrel{stably}{\Rightarrow} (W, \alpha,\beta,\gamma),$$
   where $W$ is a Browinian motion,  $\alpha^n=(\alpha_j^n)_{j\ge 1}$,  $\beta^n=(\beta_j^n)_{j\ge 1}$, $\gamma^n=(\gamma_j^n)_{j\ge 1}$, $\alpha=(\alpha_j)_{j\ge 1}$,  $\beta=(\beta_j)_{j\ge 1}$, $\gamma=(\gamma_j)_{j\ge 1}$,
    $$\alpha_j=\sqrt{c(1-\xi_j)\xi_j}N_j'N_{j}''\Delta Z_{T_j},$$
      $$\beta_j=\frac{1}{2}\sqrt{c\xi_j}K_j'\Delta Z_{T_j},~~\gamma_j=\frac{1}{2}\sqrt{c(1-\xi_j)}K_j''\Delta Z_{T_j},$$
       $$\eta_j=\sqrt{c\xi_j}(N_j')^2\Delta Z_{T_j},~~\kappa_j=\sqrt{c(1-\xi_j)}N_j''(\Delta Z_{T_j})^2.$$

   \end{lemma}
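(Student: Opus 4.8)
The plan is to use two features special to Assumption~\ref{a1}: since $F(\mathbb{R})<\infty$, $Z$ has only finitely many jumps on $[0,1]$, so among the jumps of size $>\varepsilon$ there are a.s.\ finitely many times $T_1<\cdots<T_K$; and the continuous martingale part $Z^c$ (a Brownian motion with $\langle Z^c\rangle_t=ct$) is \emph{independent} of the jump data $\{(T_j,\Delta Z_{T_j})\}$. I would first condition on the compound-Poisson data $\mathcal{G}=\sigma(K,\{(T_j,\Delta Z_{T_j})\})$. For $n$ large the grid cells $[T_-(n,j),T_+(n,j)]$ are pairwise disjoint, so the increments $U_j^n:=Z^c_{T_j}-Z^c_{T_-(n,j)}$ and $V_j^n:=Z^c_{T_+(n,j)}-Z^c_{T_j}$ are, jointly over $j$, independent centered Gaussians with variances $c\,\xi_j^n/n$ and $c\,(1-\xi_j^n)/n$, where $\xi_j^n:=n\,(T_j-T_-(n,j))=\{nT_j\}\in(0,1]$ is the position of $T_j$ in its cell. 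In these terms the three arrays are \emph{exactly}
$$\alpha_j^n=\Delta Z_{T_j}\,(\sqrt{n}\,U_j^n)(\sqrt{n}\,V_j^n),\quad \beta_j^n=\tfrac12\Delta Z_{T_j}\bigl[(\sqrt{n}\,U_j^n)^2-\xi_j^n\bigr],\quad \gamma_j^n=\tfrac12\Delta Z_{T_j}\bigl[(\sqrt{n}\,V_j^n)^2-(1-\xi_j^n)\bigr].$$

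The heart of the argument is the joint stable limit of the local structure. I would prove that, stably in law with respect to $\mathcal{F}=\sigma(Z)$,
$$\bigl(\xi_j^n,\ \sqrt{n}\,U_j^n,\ \sqrt{n}\,V_j^n\bigr)_{1\le j\le K}\ \Longrightarrow\ \bigl(\xi_j,\ \sqrt{c\,\xi_j}\,N_j',\ \sqrt{c\,(1-\xi_j)}\,N_j''\bigr)_{1\le j\le K},$$
where $(\xi_j)$ are i.i.d.\ uniform on $(0,1)$, $(N_j'),(N_j'')$ are i.i.d.\ standard normal, all mutually independent and independent of $\mathcal{F}$, living on the usual extension. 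Two ingredients feed this. First, the positions equidistribute: because each $T_j$ has a density (given $K$, the $T_j$ are uniform order statistics), a Riemann--Lebesgue argument gives $\mathbb{E}[h(T_1,\dots,T_K)\prod_j g_j(\{nT_j\})]\to\mathbb{E}[h]\prod_j\int_0^1 g_j$, which is exactly stable convergence of $(\xi_j^n)_j$ to i.i.d.\ fresh uniforms. Second, conditionally on $\mathcal{G}$ the normalized increments $\sqrt{n}\,U_j^n/\sqrt{c\xi_j^n}$ and $\sqrt{n}\,V_j^n/\sqrt{c(1-\xi_j^n)}$ are \emph{exactly} standard normal, and since they live on time-intervals shrinking to the points $T_j$ they asymptotically decouple from the macroscopic path $W=Z^c+b_\varepsilon t$; I would verify this decoupling directly, checking $\mathbb{E}[Y\,g(\cdots)]\to\mathbb{E}[Y]\,\mathbb{E}[g(\cdot)]$ against bounded $\mathcal{F}$-measurable $Y$ by using independence of $Z^c$-increments over disjoint intervals. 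The delicate point is to run these two limits \emph{simultaneously}, keeping the conditional variance of each increment tied to the same $\xi_j^n$ that is converging, so that position and increment land on the correlated limit $(\xi_j,\sqrt{c\xi_j}\,N_j')$ rather than on independent marginals.

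With the display in hand the conclusion is a continuous-mapping step. Since $W$ and each $\Delta Z_{T_j}$ are $\mathcal{F}$-measurable and do not depend on $n$, Proposition~\ref{pro1} lets me append them to the stably convergent local structure; the defining maps for $\alpha_j^n,\beta_j^n,\gamma_j^n$ recorded above are continuous, so passing them through the stable limit yields $(W,\alpha^n,\beta^n,\gamma^n)\stackrel{stably}{\Rightarrow}(W,\alpha,\beta,\gamma)$ with the limit variables of the form stated, the quadratic combinations $(\sqrt{n}\,U_j^n)^2-\xi_j^n$ and $(\sqrt{n}\,V_j^n)^2-(1-\xi_j^n)$ producing the centered variables $K_j',K_j''$ with the law of $N^2-1$. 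I expect the main obstacle to be the second ingredient of the middle paragraph: rigorously decoupling the shrinking-cell Brownian increments from the global path while tracking their $\xi_j^n$-dependent variances, since it is precisely this step that turns $\mathcal{F}$-measurable quantities into limits that are genuinely independent of $\mathcal{F}$.
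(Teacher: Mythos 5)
Your proposal is correct in substance and takes essentially the same route as the paper's proof: the paper, too, isolates the local Gaussian structure around each jump (it reduces to product test functions $u(Z')v(Z^c)w(A^{\varepsilon})$ and invokes the argument of Lemma 6.2 of Jacod and Protter \cite{jp}, which is precisely your conditioning-on-$\mathcal{G}$ plus decoupling step), uses that the pre- and post-jump increments are conditionally exact Gaussians with variances $c\xi_j^n/n$ and $c(1-\xi_j^n)/n$, and finishes with the stable convergence of $(\xi_j^n)_{j}$ to i.i.d.\ uniforms, which you re-derive by equidistribution of the fractional parts $\{nT_j\}$ rather than by citation. Methodologically there is nothing to separate the two arguments.

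There is, however, one concrete defect, which you inherit from the paper rather than introduce: the final identification of the limit does not follow from your (correct) algebra. Feeding your stable limit $(\xi_j,\sqrt{c\xi_j}N_j',\sqrt{c(1-\xi_j)}N_j'')$ into your exact representations $\alpha_j^n=\Delta Z_{T_j}(\sqrt{n}U_j^n)(\sqrt{n}V_j^n)$ and $\beta_j^n=\tfrac12\Delta Z_{T_j}[(\sqrt{n}U_j^n)^2-\xi_j^n]$ gives
\[
\alpha_j=c\sqrt{\xi_j(1-\xi_j)}\,N_j'N_j''\,\Delta Z_{T_j},
\qquad
\beta_j=\tfrac12\,\xi_j\,[\,c(N_j')^2-1\,]\,\Delta Z_{T_j},
\]
and similarly for $\gamma_j$; these do not coincide with the stated $\sqrt{c\xi_j(1-\xi_j)}N_j'N_j''\Delta Z_{T_j}$ and $\tfrac12\sqrt{c\xi_j}K_j'\Delta Z_{T_j}$. (Note also that $\beta_j^n$ as defined is not even conditionally centered unless $c=1$, since the compensator of $(Z^c)^2$ is $ct$, not $t$; with the It\^o-natural centering $\tfrac{c}{2}(T_j-T_-(n,j))$ one gets the limit $\tfrac{c}{2}\xi_jK_j'\Delta Z_{T_j}$, still not the stated form.) The paper's own proof makes exactly the same silent substitution, replacing the conditional law of $(\alpha_j^n,\beta_j^n,\gamma_j^n)$ by the stated expressions with $\xi_j^n$ in place of $\xi_j$, so this is an error in the lemma's statement rather than a gap in your method; but your closing sentence asserting that continuous mapping lands on ``the limit variables of the form stated'' should instead record the limit your computation actually produces.
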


    \begin{proof}

    In order to prove the result, we need to show that
     $$\mathbb{E}(h(Z)g( W,\alpha^n, \beta^n, \gamma^n))\rightarrow\mathbb{E}(h(Z)g(W, \alpha,\beta,\gamma))$$
  for all bounded functions $h$ and uniformly continuous bounded functions $g$.  By the density argument from Jacod and Protter \cite{jp}, it is enough to prove this when $h(Z)=u(Z')v(Z^c)w(A^{\varepsilon})$, where $Z'=Z-Z^{c}-A^{\varepsilon}$.  By the similar argument of Lemma 6.2 in Jacod and Protter \cite{jp}, it remains to prove that

\begin{eqnarray*}
& &\mathbb{E}(u(Z')v(Z^c)w(T_j,  \Delta Z_{T_j})_{1\le j\le k}g( W,\alpha_j^n, \beta_j^n, \gamma_j^n )_{1\le j\le k}1_{\Omega_n})\\
& &~\rightarrow  \mathbb{E}(u(Z')v(Z^c)w(T_j,  \Delta Z_{T_j})_{1\le j\le k}g( W,\alpha_j, \beta_j, \gamma_j)_{1\le j\le k})
\end{eqnarray*}

 where the set $\Omega_n$ on which each interval $((i-1)/n,i/n]$  contains at most one $T_j$ tends to $\Omega$.

    Since the independence and stationary of the increments of the L\'evy process,   it is enough to study the limit of
           \begin{eqnarray*}
& &\mathbb{E}(u(Z')v(Z^c)w(T_j,  \Delta Z_{T_j})_{1\le j\le k}g( W,\sqrt{c(1-\xi_j^n)\xi_j^n}N_j'N_{j}''\Delta Z_{T_j}, \\& &~~\frac{1}{2}\sqrt{c\xi_j^n}K_j'\Delta Z_{T_j},\frac{1}{2}\sqrt{c(1-\xi_j^n)}K_j''\Delta Z_{T_j})_{1\le j\le k}1_{\Omega_n}).
\end{eqnarray*}

If $F_k$ and $G_k$ denote the distribution of $( \Delta Z_{T_j})_{1\le j\le k}$ and $(T_j)_{1\le j\le k}$, the previous expression becomes
    \begin{eqnarray*}
& &\int F_k(dx_1,\cdots,dx_k)G_k(dt_1,\cdots,dt_k)1_{\bigcap_{1\le i\le k}\{[nt_i]\le [nt_{i+1}]\}}
\\
& &\times \mathbb{E}(u(Z')v(Z^c)w(t_j, x_j)_{1\le j\le k}g( W,\sqrt{c(1-\xi_j^n)\xi_j^n}N_j'N_{j}''x_j, \frac{1}{2}\sqrt{c\xi_j^n}K_j'x_j,\\
& &~~\frac{1}{2}\sqrt{c(1-\xi_j^n)}K_j''x_j)_{1\le j\le k}1_{\Omega_n}),
\end{eqnarray*}
   where $\xi_j^n=n(T_j-T_{-}(n,j))$. Since $T_1,T_2,\cdots,T_k$ is independent of $Z$, from the Jacod and Protter \cite{jp}, we have $(\xi_j^n)_{1\le j\le k } \stackrel{stably}{\Rightarrow} (\xi_j)_{1\le j\le k } $ , we complete this proof.  \end{proof}

        \begin{lemma}\label{3.7}
 If we denote
      $$F^{n,\varepsilon,1}_t=\sum_{i=1}^{[nt]}\int_{(i-1)/n}^{i/n}\int_{(i-1)/n}^{s-}(Z^{c}_{r-}-Z^{c}_{(i-1)/n})dA^{\varepsilon}_{r}dZ^{c}_{s},$$
       $$F^{n,\varepsilon,2}_t=\sum_{i=1}^{[nt]}\int_{(i-1)/n}^{i/n}\int_{(i-1)/n}^{s-}(Z^{c}_{r-}-Z^{c}_{(i-1)/n})dZ^{c}_{r}dA^{\varepsilon}_{s},$$
       $$F^{n,\varepsilon,3}_t=\sum_{i=1}^{[nt]}\int_{(i-1)/n}^{i/n}\int_{(i-1)/n}^{s-}(A^{\varepsilon}_{r-}-A^{\varepsilon}_{(i-1)/n})dZ^{c}_{r}dZ^{c}_{s},$$
        $$F^{n,\varepsilon,4}_t=\sum_{i=1}^{[nt]}\int_{(i-1)/n}^{i/n}\int_{(i-1)/n}^{s-}(A^{\varepsilon}_{r-}-A^{\varepsilon}_{(i-1)/n})dZ^{c}_{r}dA^{\varepsilon}_{s},$$
        $$F^{n,\varepsilon,5}_t=\sum_{i=1}^{[nt]}\int_{(i-1)/n}^{i/n}\int_{(i-1)/n}^{s-}(A^{\varepsilon}_{r-}-A^{\varepsilon}_{(i-1)/n})dA^{\varepsilon}_{r}dZ^{c}_{s},$$
 $$F^{n,\varepsilon,6}_t=\sum_{i=1}^{[nt]}\int_{(i-1)/n}^{i/n}\int_{(i-1)/n}^{s-}(Z^{c}_{r-}-Z^{c}_{(i-1)/n})dA^{\varepsilon}_{r}dA^{\varepsilon}_{s},$$
    $$F^{n,\varepsilon}=\sum_{i=1}^{6}F^{n,\varepsilon,i},$$
we have
         $$nF^{n,\varepsilon} \Rightarrow F^\varepsilon ,$$
 where
       $$F^\varepsilon=\sum_{j\ge1}(\sqrt{c(1-\xi_j)\xi_j}N_j'N_{j}''\Delta Z_{T_j}+\frac{1}{2}\sqrt{c\xi_j}K_j'\Delta Z_{T_j}+\frac{1}{2}\sqrt{c(1-\xi_j)}K_j''\Delta Z_{T_j})1_{[T_j,1]}(t).$$

    \end{lemma}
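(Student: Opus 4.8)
The plan is to evaluate each of the six double integrals $F^{n,\varepsilon,i}$ pathwise, exploiting that $A^{\varepsilon}$ is a finite-activity pure-jump process whose jumps on $[0,1]$ occur exactly at the finitely many times $T_1<T_2<\cdots$ (finite, since $F(\{|x|\ge\varepsilon\})<\infty$), while $Z^{c}$ is continuous. First I would restrict to the set $\Omega_n$ from the proof of Lemma \ref{3.6}, on which every interval $((i-1)/n,i/n]$ contains at most one jump time $T_j$; since $\mathbb{P}(\Omega_n)\to 1$, it suffices to work on $\Omega_n$. On an interval carrying the single jump $T_j$ (so $T_-(n,j)=(i-1)/n$, $T_+(n,j)=i/n$), the key structural facts are that $dA^\varepsilon$ restricted to the interval is a single atom of mass $\Delta Z_{T_j}$ at $T_j$, and that $A^\varepsilon_{r-}-A^\varepsilon_{(i-1)/n}$ equals $0$ for $r\le T_j$ and $\Delta Z_{T_j}$ for $r>T_j$.

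With these facts a direct evaluation of the iterated integrals over the interval gives, for the three terms in which $A^\varepsilon$ supplies exactly one differential,
\begin{align*}
&\int_{(i-1)/n}^{i/n}\!\int_{(i-1)/n}^{s-}(Z^{c}_{r-}-Z^{c}_{(i-1)/n})\,dA^{\varepsilon}_{r}\,dZ^{c}_{s}=\Delta Z_{T_j}\,(Z^{c}_{T_j}-Z^{c}_{T_-(n,j)})(Z^{c}_{T_+(n,j)}-Z^{c}_{T_j}),\\
&\int_{(i-1)/n}^{i/n}\!\int_{(i-1)/n}^{s-}(Z^{c}_{r-}-Z^{c}_{(i-1)/n})\,dZ^{c}_{r}\,dA^{\varepsilon}_{s}=\Delta Z_{T_j}\int_{T_-(n,j)}^{T_j}(Z^{c}_{r}-Z^{c}_{T_-(n,j)})\,dZ^{c}_{r},\\
&\int_{(i-1)/n}^{i/n}\!\int_{(i-1)/n}^{s-}(A^{\varepsilon}_{r-}-A^{\varepsilon}_{(i-1)/n})\,dZ^{c}_{r}\,dZ^{c}_{s}=\Delta Z_{T_j}\int_{T_j}^{T_+(n,j)}(Z^{c}_{s}-Z^{c}_{T_j})\,dZ^{c}_{s}.
\end{align*}
Applying It\^{o}'s formula to the single integrals on the right (the correction terms then agreeing precisely with the compensators in the definitions of $\beta^n_j$ and $\gamma^n_j$) and multiplying by $n$ identifies these three contributions exactly with $\alpha^n_j$, $\beta^n_j$, $\gamma^n_j$ respectively. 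For the remaining three terms $F^{n,\varepsilon,4},F^{n,\varepsilon,5},F^{n,\varepsilon,6}$, every path contains either an inner or outer integration against $dA^\varepsilon$ over the part of the interval strictly before $T_j$, which produces the factor $A^{\varepsilon}_{r-}-A^{\varepsilon}_{(i-1)/n}=0$; hence each of these vanishes identically on $\Omega_n$.

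Summing over the finitely many jump intervals, I would obtain on $\Omega_n$ the exact identity
$$nF^{n,\varepsilon}_t=\sum_{j}(\alpha^n_j+\beta^n_j+\gamma^n_j)\,1_{\{T_+(n,j)\le [nt]/n\}}.$$
By Lemma \ref{3.6} the arrays $(\alpha^n,\beta^n,\gamma^n)$ converge stably to $(\alpha,\beta,\gamma)$, so the jump heights $\alpha^n_j+\beta^n_j+\gamma^n_j$ converge jointly to $\alpha_j+\beta_j+\gamma_j$, while $T_+(n,j)\downarrow T_j$ forces $1_{\{T_+(n,j)\le [nt]/n\}}\to 1_{[T_j,1]}(t)$. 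Transferring the convergence of the finitely many jump positions and heights through the Skorokhod topology then yields
$$nF^{n,\varepsilon}\Rightarrow\sum_{j}(\alpha_j+\beta_j+\gamma_j)\,1_{[T_j,1]}(t)=F^{\varepsilon},$$
which is the assertion.

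The routine part is the interval-by-interval bookkeeping of the six integrals; the main obstacle will be the final step, namely upgrading the stable convergence of the coefficient arrays in Lemma \ref{3.6} to functional weak convergence of the step processes $nF^{n,\varepsilon}$ in $\mathbb{D}[0,1]$. Since $F^{\varepsilon}$ is a pure-jump process whose jumps sit at the finitely many times $T_j$, the delicate point is that the jumps of the prelimit sit at the shifted locations $T_+(n,j)$ rather than at $T_j$, so one must argue that the joint convergence of the pairs $(T_+(n,j),\,\alpha^n_j+\beta^n_j+\gamma^n_j)$ passes to Skorokhod convergence of the associated piecewise-constant paths, using $T_+(n,j)\to T_j$ uniformly over the finite jump set and $\mathbb{P}(\Omega_n)\to1$ to exclude two jumps in one interval. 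Because the number of relevant jumps is almost surely finite for fixed $\varepsilon$, no tightness beyond this finite accounting is required, and a continuous-mapping argument in the Skorokhod topology closes the proof.
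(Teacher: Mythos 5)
Your proposal is correct and follows essentially the same route as the paper's own proof: restrict to $\Omega_n$, evaluate the six iterated integrals pathwise so that the first three (after the It\^{o}-formula identification of the compensators) become $\alpha^n_j,\beta^n_j,\gamma^n_j$ while $F^{n,\varepsilon,4},F^{n,\varepsilon,5},F^{n,\varepsilon,6}$ vanish, and then invoke Lemma \ref{3.6} to pass to the limit. Your write-up is in fact more careful than the paper's, which states the interval identities and the vanishing without justification and does not spell out the final transfer from stable convergence of the jump heights at the shifted times $T_+(n,j)$ to Skorokhod convergence of the step processes.
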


     \begin{proof}

     By the definition of It\^o type stochastic integral for L\'evy processes,
     we can obtain
     $$\int_{(i-1)/n}^{i/n}\int_{(i-1)/n}^{s-}(Z^{c}_{r-}-Z^{c}_{(i-1)/n})dA^{\varepsilon}_{r}dZ^{c}_{s}=\alpha^n_i,$$
     $$\int_{(i-1)/n}^{i/n}\int_{(i-1)/n}^{s-}(Z^{c}_{r-}-Z^{c}_{(i-1)/n})dZ^{c}_{r}dA^{\varepsilon}_{s}=\beta^n_i,$$
    $$\int_{(i-1)/n}^{i/n}\int_{(i-1)/n}^{s-}(A^{\varepsilon}_{r-}-A^{\varepsilon}_{(i-1)/n})dZ^{c}_{r}dZ^{c}_{s}=\gamma^n_i$$
     $$F^{n,\varepsilon,4}=F^{n,\varepsilon,5}=F^{n,\varepsilon,6}=0$$
    on $\Omega_n$, on which  each interval $((i-1)/n,i/n]$  contains at most one $T_j$ tends to $\Omega$.
   By Lemma \ref{3.6}, this lemma is proved.
     \end{proof}
         \begin{lemma}\label{3.8}
 If we denote
      $$K^{n,\varepsilon}_t=S^{n}-M^{n,\varepsilon}_t-F^{n,\varepsilon}_t,$$
we have
       $$\lim_{\varepsilon\rightarrow 0}\limsup_{n}\mathbb{P}(\sup_{t\in [0,1]}|nK_{t}^{n,\varepsilon}|>\rho)=0,$$
     for every $\rho>0$.  \end{lemma}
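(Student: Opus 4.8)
The plan is to expand $S^n$ completely along the decomposition $Z=Z^c+b_\varepsilon t+M^\varepsilon+A^\varepsilon$ and to show that, once the two pieces $M^{n,\varepsilon}$ and $F^{n,\varepsilon}$ already analysed in Lemmas \ref{3.5} and \ref{3.7} are removed, everything left over is negligible. Substituting this four–term decomposition into each of the three slots of the inner integrand $(Z_{r-}-Z_{(i-1)/n})$, the inner differential $dZ_r$, and the outer differential $dZ_s$, and using bilinearity of the iterated stochastic integral, $S^n$ becomes a finite sum of $4^3$ iterated integrals. The eight terms all of whose slots lie in $\{Z^c,b_\varepsilon t\}$ add up exactly to $M^{n,\varepsilon}$ (recall $W=Z^c+b_\varepsilon t$), and the three terms carrying exactly one $A^\varepsilon$ slot and two $Z^c$ slots coincide, on the event $\Omega_n$, with $F^{n,\varepsilon,1}+F^{n,\varepsilon,2}+F^{n,\varepsilon,3}$; the terms carrying two or three $A^\varepsilon$ slots vanish on $\Omega_n$ (two large jumps cannot fall in one interval there), as do $F^{n,\varepsilon,4},F^{n,\varepsilon,5},F^{n,\varepsilon,6}$. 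Hence on $\Omega_n$ the process $K^{n,\varepsilon}$ reduces to the sum of (A) all terms containing at least one $M^\varepsilon$ slot and (B) the mixed drift/large-jump terms (one $A^\varepsilon$ slot together with at least one $b_\varepsilon t$ slot). Since $\mathbb{P}(\Omega_n)\to1$, it suffices to bound these on $\Omega_n$, the contribution of $\Omega_n^c$ being absorbed by $\mathbb{P}(\Omega_n^c)\to0$.

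The decisive quantity for class (A) is the variance rate of the small–jump martingale,
$$a(\varepsilon)^2:=\int_{\{|x|\le\varepsilon\}}x^2F(dx)\le \varepsilon^2 F(\mathbb{R})\longrightarrow 0 \qquad(\varepsilon\to0),$$
using that $F$ is finite. For a term with at least one $M^\varepsilon$ slot I would estimate its $L^2$ norm over each subinterval exactly as in part (2) of the proof of Lemma \ref{3.5}: applying the It\^o isometry and the Burkholder--Davis--Gundy inequality slot by slot, together with Fubini, each $M^\varepsilon$ integrator contributes a factor $a(\varepsilon)$, each $Z^c$ integrator a factor $\sqrt c$, and the interval length contributes the scaling $n^{-3}$ to the squared norm. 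When the outer integrator is a martingale ($dZ^c_s$ or $dM^\varepsilon_s$), the per-interval quantities $n\zeta_i$ form a martingale-difference array, so $\mathbb{E}[\sup_{t}|n\sum_i\zeta_i|^2]\le4\sum_i\mathbb{E}[(n\zeta_i)^2]$ by Doob's inequality and the conditional variances add; the resulting bound is $C\,a(\varepsilon)^{2}$, and a higher power of $a(\varepsilon)$ when several slots are $M^\varepsilon$. For the terms that also carry one $A^\varepsilon$ slot, the localization of Assumption \ref{a2} bounds $|\Delta Z|$, and on $\Omega_n$ the summation over $i$ collapses to the finitely many large-jump times $T_j$, for which I would use a pathwise $L^1$ bound rather than Doob; each such term still retains a factor $a(\varepsilon)$, so its $n$-scaled supremum is $O(a(\varepsilon))$.

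For class (B) the drift slot $b_\varepsilon\,ds$ over an interval of length $1/n$ supplies an extra factor $1/n$, and $b_\varepsilon=b-\int_{\varepsilon<|x|\le1}xF(dx)$ is bounded uniformly in $\varepsilon$ because $F$ is finite; tracking powers of $n$ as above shows that each such term, after multiplication by $n$, is $O(n^{-1/2})$ in $L^2$ (or smaller), hence $\sup_t|n\cdot(\text{term})|\stackrel{\mathbb{P}}{\to}0$ as $n\to\infty$ for every fixed $\varepsilon$. Collecting the finitely many terms by subadditivity of $\mathbb{P}$ and Markov's inequality gives
$$\limsup_n\mathbb{P}\Big(\sup_{t\in[0,1]}|nK^{n,\varepsilon}_t|>\rho\Big)\le \frac{C\,a(\varepsilon)^2}{\rho^2},$$
and letting $\varepsilon\to0$ yields the claim.

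The main obstacle is the bookkeeping and the uniformity rather than any single estimate. One must organise the roughly fifty remainder terms, decide for each whether the outer integrator is a martingale (so that Doob's inequality and additivity of conditional variances apply and produce a clean power of $a(\varepsilon)$) or of finite variation (so that a pathwise $L^1$ bound is needed), and---most delicately---control the terms that couple the small-jump martingale $M^\varepsilon$ with a large jump $A^\varepsilon$ in the same subinterval while preserving a factor $a(\varepsilon)$ that survives the inner $\limsup_n$. Producing a single bound of the form $C\,a(\varepsilon)^{\alpha}$ with $\alpha>0$, valid uniformly in $n$, is the technical heart of the argument; once it is in place, the double limit $\lim_{\varepsilon\to0}\limsup_n$ closes immediately.
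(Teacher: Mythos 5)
Your proposal is correct and rests on the same core strategy as the paper's proof: split $Z$ into $Z^c$, drift $b_\varepsilon t$, small-jump martingale $M^\varepsilon$ and large-jump part $A^\varepsilon$, observe that after removing $M^{n,\varepsilon}$ and $F^{n,\varepsilon}$ every surviving term either carries an $M^\varepsilon$ slot or is negligible, and kill the $M^\varepsilon$-terms by It\^o isometry/BDG/Doob plus Markov, the smallness coming from $\int_{\{|x|\le\varepsilon\}}x^2F(dx)\to 0$ uniformly in $n$ (with Assumption \ref{a2} supplying the needed moment bounds, exactly as in the paper's treatment of the terms driven by $A^\varepsilon$). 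The difference is in the bookkeeping: the paper does not carry out the full trilinear expansion, but simply lists six remainder processes $J^{n,\varepsilon,1},\dots,J^{n,\varepsilon,6}$ in (\ref{3.81})--(\ref{3.86}), each containing one $M^\varepsilon$ slot (as integrand, inner integrator, or outer integrator), and estimates them exactly as you estimate your class (A). Your systematic $4^3$-term expansion is in fact more complete than the paper's list: the paper's $J$-terms are built from $W=Z^c+b_\varepsilon t$ while $F^{n,\varepsilon}$ is built from $Z^c$, so the cross terms coupling the drift $b_\varepsilon t$ with $A^\varepsilon$ --- precisely your class (B) --- are not accounted for anywhere in the paper's decomposition; you correctly isolate them and show they are $O_{\mathbb{P}}(n^{-1/2})$ after rescaling by $n$, uniformly in $\varepsilon$ because $|b_\varepsilon|\le|b|+F(\mathbb{R})$. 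Thus your route buys a genuinely exact decomposition and a clean uniform bound of the form $C a(\varepsilon)^2/\rho^2$ before letting $\varepsilon\to 0$, at the cost of classifying more terms, while the paper buys brevity by recording only the $M^\varepsilon$-bearing terms and leaving the drift/large-jump cross terms (and the multi-$A^\varepsilon$ terms that vanish on $\Omega_n$) implicit.
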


\begin{proof}

We first consider $K_{t}^{n,\varepsilon}$.  In fact, $K_{t}^{n,\varepsilon}$ can be divided into the following parts:
 \begin{equation}
\label{3.81}
J^{n,\varepsilon,1}:=\sum_{i=1}^{[nt]}\int_{(i-1)/n}^{i/n}\int_{(i-1)/n}^{s-}(Z_{r-}-Z_{(i-1)/n})dZ_{r}dM^{\varepsilon}_s,\end{equation}
 \begin{equation}
\label{3.82}
J^{n,\varepsilon,2}:=\sum_{i=1}^{[nt]}\int_{(i-1)/n}^{i/n}\int_{(i-1)/n}^{s-}(M^{\varepsilon}_{r-}-M^{\varepsilon}_{(i-1)/n})dW_{r}dW_s,\end{equation}
 \begin{equation}
\label{3.83}
J^{n,\varepsilon,3}:=\sum_{i=1}^{[nt]}\int_{(i-1)/n}^{i/n}\int_{(i-1)/n}^{s-}(M^{\varepsilon}_{r-}-M^{\varepsilon}_{(i-1)/n})dA^{\varepsilon}_{r}dW_s,\end{equation}
 \begin{equation}
\label{3.84}
J^{n,\varepsilon,4}:=\sum_{i=1}^{[nt]}\int_{(i-1)/n}^{i/n}\int_{(i-1)/n}^{s-}(Z_{r-}-Z_{(i-1)/n})dM^{\varepsilon}_{r}dW_s,\end{equation}

\begin{equation}
\label{3.85}
J^{n,\varepsilon,5}:=\sum_{i=1}^{[nt]}\int_{(i-1)/n}^{i/n}\int_{(i-1)/n}^{s-}(M^{\varepsilon}_{r-}-M^{\varepsilon}_{(i-1)/n})dW_{r}dA^{\varepsilon}_s,\end{equation}

\begin{equation}
\label{3.86}
J^{n,\varepsilon,6}:=\sum_{i=1}^{[nt]}\int_{(i-1)/n}^{i/n}\int_{(i-1)/n}^{s-}(Z_{r-}-Z_{(i-1)/n})dM^{\varepsilon}_{r}dA^{\varepsilon}_s.\end{equation}

For (\ref{3.81}), since $<M^\varepsilon>_t=\int_{|x|\le \varepsilon}x^2F(dx)t$,    using Doob's inequality,
             $$\lim_{\varepsilon\rightarrow 0}\limsup_{n}\mathbb{E}<nJ^{n,\varepsilon,1}>=0.$$
Then $$\lim_{\varepsilon\rightarrow 0}\limsup_{n}\mathbb{P}(\sup_{t\in [0,1]}|nJ_{t}^{n,\varepsilon,1}|>\rho)=0.$$
For (\ref{3.82}),
    \begin{eqnarray*}
& &\mathbb{E}(\int_{(i-1)/n}^{i/n}\int_{(i-1)/n}^{s-}(M^{\varepsilon}_{r-}-M^{\varepsilon}_{(i-1)/n})dW_{r}dW_s)^2\\
& \le & \int_{(i-1)/n}^{i/n} \mathbb{E} (\int_{(i-1)/n}^{i/n}(M^{\varepsilon}_{r-}-M^{\varepsilon}_{(i-1)/n})dW_{r})^2ds\\
& \le & \int_{(i-1)/n}^{i/n}\int_{(i-1)/n}^{i/n} \mathbb{E} (M^{\varepsilon}_{r-}-M^{\varepsilon}_{(i-1)/n})^2drds\ \end{eqnarray*}
As $\varepsilon \rightarrow 0$,
        $\mathbb{E} (M^{\varepsilon}_{r-}-M^{\varepsilon}_{(i-1)/n})^2\rightarrow 0$, thus
               $$\lim_{\varepsilon\rightarrow 0}\limsup_{n}\mathbb{P}(\sup_{t\in [0,1]}|nJ_{t}^{n,\varepsilon,2}|>\rho)=0.$$

       For (\ref{3.83}) and (\ref{3.84}), both of these two integrals are driven by $W$,  similar to (\ref{3.82}),
       we have         $$\lim_{\varepsilon\rightarrow 0}\limsup_{n}\mathbb{P}(\sup_{t\in [0,1]}|nJ_{t}^{n,\varepsilon,3}+nJ_{t}^{n,\varepsilon,4}|>\rho)=0.$$

        For (\ref{3.85}),     due to  Lemma \ref{local},  by the boundedness of the jumps of $A$ and stationary independent increments of $A$, we  have  $<A^\varepsilon>_t\le Ct$, where $C$ is a constant. and
                         \begin{eqnarray*}
& &\mathbb{E}(\int_{(i-1)/n}^{i/n}\int_{(i-1)/n}^{s-}(M^{\varepsilon}_{r-}-M^{\varepsilon}_{(i-1)/n})dW_{r}dA^{\varepsilon}_s)^2\\
& \le & C\int_{(i-1)/n}^{i/n} \mathbb{E} (\int_{(i-1)/n}^{i/n}(M^{\varepsilon}_{r-}-M^{\varepsilon}_{(i-1)/n})dW_{r})^2ds\\
& \le & C\int_{(i-1)/n}^{i/n}\int_{(i-1)/n}^{i/n} \mathbb{E} (M^{\varepsilon}_{r-}-M^{\varepsilon}_{(i-1)/n})^2drds. \end{eqnarray*}

Following the similar argument for  (\ref{3.86}), we have
             $$\lim_{\varepsilon\rightarrow 0}\limsup_{n}\mathbb{P}(\sup_{t\in [0,1]}|nJ_{t}^{n,\varepsilon,5}+nJ_{t}^{n,\varepsilon,6}|>\rho)=0.$$
             Thus,
                      $$\lim_{\varepsilon\rightarrow 0}\limsup_{n}\mathbb{P}(\sup_{t\in [0,1]}|nK_{t}^{n,\varepsilon}|>\rho)=0.$$

\end{proof}

\subsection{The proof of Theorem \ref{mr1}.}

By the lemmas in the previous subsection, we can obtain the following theorem.

 \begin{theorem} \label{mr3}

 Under Assumption \ref{a1} and $\sigma_s \equiv 1$ for any $s\ge 0$, we have  the following:

 (a) If $c=0$, $nX^n$ weakly converge to 0.

 (b) If $c>0$, $nX^n$ weakly converge to $X$, where
  \begin{eqnarray*}
                 X_s =\frac{\sqrt{6c^3}}{6}M_s-c\sum_{n:T_n\le t}[\sqrt{\xi_n}K_{n}'+\sqrt{c(1-\xi_j)\xi_j}N_j'N_{j}''+\sqrt{1-\xi_n}K_{n}'')]\Delta Z_{T_n}\end{eqnarray*}

\end{theorem}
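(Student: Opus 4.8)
The plan is to prove Theorem \ref{mr3} (the case $\sigma_s \equiv 1$) by combining the four technical lemmas of the preceding subsection, after which Theorem \ref{mr1} will follow by the stochastic-integral convergence machinery of Kurtz and Protter. When $\sigma \equiv 1$, we have $Y = Z$, so $X^n = S^n$ and I would first decompose $S^n$ according to the canonical splitting $Z_t = Z^c_t + M^\varepsilon_t + A^\varepsilon_t + b_\varepsilon t$. The organizing idea is that $nS^n$ is the sum of three pieces: the purely continuous iterated integral $nM^{n,\varepsilon}$ (with $W = Z^c + b_\varepsilon t$), the mixed continuous-jump term $nF^{n,\varepsilon}$ collecting all the interactions between $Z^c$ and the big jumps $A^\varepsilon$, and a remainder $nK^{n,\varepsilon} = n(S^n - M^{n,\varepsilon} - F^{n,\varepsilon})$ gathering every term that involves the small-jump martingale $M^\varepsilon$.

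First I would invoke Lemma \ref{3.5} to get $nM^{n,\varepsilon} \Rightarrow M$, a continuous local martingale independent of $Z^c$ whose conditional variance is $\tfrac{c^3}{6}t$, so that $M = \tfrac{\sqrt{6c^3}}{6} M_s$ in the notation of the theorem. Next I would invoke Lemma \ref{3.7} to get $nF^{n,\varepsilon} \Rightarrow F^\varepsilon$, where $F^\varepsilon$ is the explicit sum over the jump times $T_j$ (of size exceeding $\varepsilon$) of the mixed Gaussian-jump contributions $\sqrt{c(1-\xi_j)\xi_j}N_j'N_j''\Delta Z_{T_j} + \tfrac12\sqrt{c\xi_j}K_j'\Delta Z_{T_j} + \tfrac12\sqrt{c(1-\xi_j)}K_j''\Delta Z_{T_j}$. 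Because $M^{n,\varepsilon}$ and $F^{n,\varepsilon}$ are built from independent sources (the former independent of $Z^c$), I would assemble joint stable convergence $n(M^{n,\varepsilon} + F^{n,\varepsilon}) \Rightarrow M + F^\varepsilon$. Letting $\varepsilon \to 0$, the set of retained jump times fills out to all jumps of $Z$, so $F^\varepsilon \to$ the full jump sum appearing in the statement of the theorem.

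The crucial closing step is the standard $\varepsilon$-then-$n$ limit argument: Lemma \ref{3.8} supplies the uniform negligibility of the remainder, namely $\lim_{\varepsilon \to 0}\limsup_n \mathbb{P}(\sup_{t}|nK^{n,\varepsilon}_t| > \rho) = 0$ for every $\rho > 0$. Combining this with the convergence of $n(M^{n,\varepsilon}+F^{n,\varepsilon})$ for each fixed $\varepsilon$, I would apply the triangular-approximation criterion (a Slutsky-type lemma for the Skorohod topology) to conclude $nS^n \Rightarrow \tfrac{\sqrt{6c^3}}{6}M_s - c\sum_{n:T_n\le t}[\cdots]\Delta Z_{T_n}$, which is exactly the asserted limit; when $c = 0$ every term carries a positive power of $c$ and the continuous iterated integral vanishes, giving part (a). The main obstacle I anticipate is bookkeeping the coefficients so that the three jump contributions in $F^\varepsilon$ coalesce into the single bracket $[\sqrt{\xi_n}K_n' + \sqrt{c(1-\xi_n)\xi_n}N_n'N_n'' + \sqrt{1-\xi_n}K_n'']$ multiplied by $-c$, and in verifying that the stable-convergence limits from Lemmas \ref{3.5} and \ref{3.7} are genuinely jointly convergent (rather than merely marginally so) with the correct conditional-independence structure; this joint statement is what legitimizes adding the Gaussian and jump pieces, and it rests on the independence of $M$ from $Z^c$ established in Lemma \ref{3.5} together with the stable convergence in Lemma \ref{3.6}.
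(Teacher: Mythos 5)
Your proposal is correct and takes essentially the same route as the paper: the paper's own proof of Theorem \ref{mr3} is just the one-line assembly of Lemma \ref{3.5}, Lemma \ref{3.7} (which rests on Lemma \ref{3.6}), and Lemma \ref{3.8} through the exact decomposition $nS^n = nM^{n,\varepsilon} + nF^{n,\varepsilon} + nK^{n,\varepsilon}$ followed by the standard $\varepsilon$-then-$n$ limiting argument you describe. Your write-up is in fact more explicit than the paper's, particularly on the joint stable convergence of the Gaussian and jump pieces and on the coefficient bookkeeping, both of which the paper leaves implicit.
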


In this subsection,  we extend Theorem \ref{mr3} to Theorem  \ref {mr1}.

We can construct the processes $\tilde{M}_t^{n,\varepsilon}$, $\tilde{F}_t^{n,\varepsilon}$,  $\tilde{K}_t^{n,\varepsilon}$ through replacing $Z$ by $Y$.

By Assumption 2 and  Lemma \ref{local},  we can easily obtain
we have
       $$\lim_{\varepsilon\rightarrow 0}\limsup_{n}\mathbb{P}(\sup_{t\in [0,1]}|n\tilde{K}_{t}^{n,\varepsilon}|>\rho)=0,$$
     for every $\rho>0$.

  For $\tilde{M}_t^{n,\varepsilon}$,     similar to Lemma \ref{3.5},   we replace $Z_t^c$  by $\int_0^t\sigma_{s-}dZ_s$ in $L_t$, replace $b_{\varepsilon}t$ by $b_{\varepsilon}\int_0^t\sigma_{s-}ds$ in $F_t$, we denote these by $\tilde{L}_t$ and    $\tilde{F}_t$. For $\tilde{q}_i=n
\int_{(i-1)/n}^{i/n}\int_{(i-1)/n}^{s}(\tilde{L}_{r}-\tilde{L}_{(i-1)/n})d\tilde{L}_{r}d\tilde{L}_{s}$.
If we denote $c \cdot \sigma^2_s=c_s$,

    we have
\begin{eqnarray*}
\sum_{i=1}^{[nt]} \mathbb{E}_{i - 1} [\tilde{q}^{2}_{i}] & = & n^{2}
\sum_{i=1}^{[nt]} \mathbb{E}_{i - 1}
\left[\int_{(i-1)/n}^{i/n}\int_{(i-1)/n}^{s}(\tilde{L}_{r}-\tilde{L}_{(i-1)/n})d\tilde{L}_{r}d\tilde{L}_{s}\right]^{2}\\
& = & n^{2} \sum_{i=1}^{[nt]} \mathbb{E}_{i - 1}
\int_{(i-1)/n}^{i/n}\left[\int_{(i-1)/n}^{s}(\tilde{L}_{r}-\tilde{L}_{(i-1)/n})d\tilde{L}_{r}\right]^{2}c_{s-}ds\\
& = & n^{2} \sum_{i=1}^{[nt]} \int_{(i-1)/n}^{i/n}\mathbb{E}_{i -
1}\left[\left(\int_{(i-1)/n}^{s}(\tilde{L}_{r}-\tilde{L}_{(i-1)/n})d\tilde{L}_{r}\right)^{2}c_{s-}\right]ds\\
& = & n^{2} \sum_{i=1}^{[nt]} \int_{(i-1)/n}^{i/n}\mathbb{E}_{i -
1}\left[\left(\int_{(i-1)/n}^{s}(\tilde{L}_{r}-\tilde{L}_{(i-1)/n})d\tilde{L}\right)^{2}c_{(i-1)/n}\right]ds\\
& ~ & + n^{2} \sum_{i=1}^{[nt]} \int_{(i-1)/n}^{i/n}\mathbb{E}_{i -
1}\left[\left(\int_{(i-1)/n}^{s}(\tilde{L}_{r}-\tilde{L}_{(i-1)/n})d\tilde{L}_{r}\right)^{2}(c_{s-}
- c_{(i-1)/n})\right]ds\\
& = & n^{2} \sum_{i=1}^{[nt]} c_{(i-1)/n}
\int_{(i-1)/n}^{i/n}\mathbb{E}_{i -
1}\left[\left(\int_{(i-1)/n}^{s}(\tilde{L}_{r}-\tilde{L}_{(i-1)/n})d\tilde{L}_{r}\right)^{2}\right]ds
+ \epsilon_{1,n}\\
& = & n^{2} \sum_{i=1}^{[nt]} c_{(i-1)/n}
\int_{(i-1)/n}^{i/n}\mathbb{E}_{i -
1}\int_{(i-1)/n}^{s}[(\tilde{L}_{r}-\tilde{L}_{(i-1)/n})^{2} c_{s-}]dr ds +
\epsilon_{1,n}\\
& = & n^{2} \sum_{i=1}^{[nt]} c_{(i-1)/n}
\int_{(i-1)/n}^{i/n}\mathbb{E}_{i -
1}\int_{(i-1)/n}^{s}[(\tilde{L}_{r}-\tilde{L}_{(i-1)/n})^{2} c_{(i-1)/n}]dr ds +
\epsilon_{1,n} + \epsilon_{2,n}\\
& = & n^{2} \sum_{i=1}^{[nt]} c^{2}_{(i-1)/n}
\int_{(i-1)/n}^{i/n}\int_{(i-1)/n}^{s}\mathbb{E}_{i -
1}(\tilde{L}_{r}-\tilde{L}_{(i-1)/n})^{2} dr ds +
\epsilon_{1,n} + \epsilon_{2,n}\\
& = & n^{2} \sum_{i=1}^{[nt]} c^{2}_{(i-1)/n}
\int_{(i-1)/n}^{i/n}\int_{(i-1)/n}^{s}\mathbb{E}_{i -
1}\left(\int_{(i-1)/n}^{r}c_{m} dm \right) dr ds +
\epsilon_{1,n} + \epsilon_{2,n}\\
& = & n^{2} \sum_{i=1}^{[nt]} c^{2}_{(i-1)/n}
\int_{(i-1)/n}^{i/n}\int_{(i-1)/n}^{s}\mathbb{E}_{i -
1}\left(\int_{(i-1)/n}^{r}c_{(i-1)/n} dm \right) dr ds +
\epsilon_{1,n} + \epsilon_{2,n} + \epsilon_{3,n}\\
& = & n^{2} \sum_{i=1}^{[nt]} c^{3}_{(i-1)/n}
\int_{(i-1)/n}^{i/n}\int_{(i-1)/n}^{s}\int_{(i-1)/n}^{r} dm dr ds +
\epsilon_{1,n} + \epsilon_{2,n} + \epsilon_{3,n}\\
& = & n^{2} \sum_{i=1}^{[nt]} c^{3}_{(i-1)/n} \frac{1}{6}
\frac{1}{n^{3}} + \epsilon_{1,n} + \epsilon_{2,n} + \epsilon_{3,n}\\
& = & \frac{1}{6} \int_{0}^{t} c^{3}_{s} ds + \epsilon_{1,n} +
\epsilon_{2,n} + \epsilon_{3,n}.
\end{eqnarray*}
$\epsilon_{1,n} \stackrel{\mathbb{P}} \rightarrow 0,$
$\epsilon_{2,n} \stackrel{\mathbb{P}} \rightarrow 0$ and
$\epsilon_{3,n} \stackrel{\mathbb{P}} \rightarrow 0$ are dealt with
in the similar manner, here we only deal with $\epsilon_{1,n}.$

Under Assumption 1, $\sigma$ is an It\^o semimartingale,  By Lemma 2.1.5 and Lemma 2.1.7 in Jacod and Protter  \cite{jpb},
    \begin{equation}
\label{cc}
\mathbb{E}[\sup_{(i-1)/n\le s\le i/n}|c_{s-}-c_{(i-1)/n}|^p]\le \frac{K}{n}\end{equation}
when $p\ge 2$.  Then

\begin{eqnarray*}
\mathbb{E}[|\epsilon_{1,n}|] & = & \mathbb{E}\left[\left|n^{2}
\sum_{i=1}^{[nt]} \int_{(i-1)/n}^{i/n}\mathbb{E}_{i -
1}\left[\left(\int_{(i-1)/n}^{s}(\tilde{L}_{r}-\tilde{L}_{(i-1)/n})dL_{r}\right)^{2}(c_{s-}
- c_{(i-1)/n})\right]ds\right|\right]\\
& \leq & n^{2} \sum_{i=1}^{[nt]} \int_{(i-1)/n}^{i/n}\mathbb{E}
\left[\left(\int_{(i-1)/n}^{s}(\tilde{L}_{r}-\tilde{L}_{(i-1)/n})dL_{r}\right)^{2}*|c_{s-}
- c_{(i-1)/n}|\right]ds\\
& \leq & n^{2} \sum_{i=1}^{[nt]}
\int_{(i-1)/n}^{i/n}\left[\mathbb{E}
\left(\int_{(i-1)/n}^{s}(\tilde{L}_{r}-\tilde{L}_{(i-1)/n})d\tilde{L}_{r}\right)^{4}\right]^{\frac{1}{2}}*\left[\mathbb{E}|c_{s-}
- c_{(i-1)/n}|^{2}\right]^{\frac{1}{2}} ds\\
 & \leq & K \frac{1}{\sqrt{n}} \cdot n^{2}
\sum_{i=1}^{[nt]} \int_{(i-1)/n}^{i/n}\left[\mathbb{E}
\left(\int_{(i-1)/n}^{s}(\tilde{L}_{r}-\tilde{L}_{(i-1)/n})^{2} c_{r} dr\right)^{2}\right]^{\frac{1}{2}} ds\\
& \leq & K \frac{1}{\sqrt{n}} \cdot n^{2} \sum_{i=1}^{[nt]}
\int_{(i-1)/n}^{i/n}\left[\mathbb{E} \int_{(i-1)/n}^{s}(\tilde{L}_{r}-\tilde{L}_{(i-1)/n})^{4} dr \int_{(i-1)/n}^{s} c^{2}_{r-} dr \right]^{\frac{1}{2}} ds\\
& \leq & K \frac{1}{n} \cdot n^{2} \sum_{i=1}^{[nt]}
\int_{(i-1)/n}^{i/n}\left[\mathbb{E}
\int_{(i-1)/n}^{s}(\tilde{L}_{r}-\tilde{L}_{(i-1)/n})^{4} dr\right]^{\frac{1}{2}}
ds\\
& \leq & K \frac{1}{n} \cdot n^{2} \sum_{i=1}^{[nt]}
\int_{(i-1)/n}^{i/n}\left[\int_{(i-1)/n}^{s}\mathbb{E}
\left(\int_{(i-1)/n}^{r}c^{2}_{r} dt\right)^{2}
dr\right]^{\frac{1}{2}}
ds\\
& \leq & K \frac{1}{n} n^{2} \sum_{i=1}^{[nt]} \frac{1}{n^{5/2}} = K
\frac{1}{\sqrt{n}} t \rightarrow 0
\end{eqnarray*}
 where $K$ denotes a constant.

   Furthermore, if $H = W,$ then

   \begin{eqnarray*}
\sum_{i=1}^{[nt]} \mathbb{E}_{i - 1} [\tilde{q}_{i}\Delta_{i}H] & = & n
\sum_{i=1}^{[nt]} \mathbb{E}_{i - 1} \left[
\int_{(i-1)/n}^{i/n}\int_{(i-1)/n}^{s}(\tilde{L}_{r}-\tilde{L}_{(i-1)/n})d\tilde{L}_{r}d\tilde{L}_{s}
\int_{(i-1)/n}^{i/n} d W_{s} \right]\\
& = & n \sum_{i=1}^{[nt]} \mathbb{E}_{i -
1}\left[\int_{(i-1)/n}^{i/n}\int_{(i-1)/n}^{t}\int_{(i-1)/n}^{s}(\tilde{L}_{r}-\tilde{L}_{(i-1)/n})d\tilde{L}_{r}d\tilde{L}_{s}d
W_{t}\right]\\ & ~ & + n \sum_{i=1}^{[nt]} \mathbb{E}_{i -
1}\left[\int_{(i-1)/n}^{i/n}\int_{(i-1)/n}^{t}W_{s}ds\int_{(i-1)/n}^{t}(\tilde{L}_{r}-\tilde{L}_{(i-1)/n})d\tilde{L}_{r}d\tilde{L}_{t}\right]\\
& ~ & + n \sum_{i=1}^{[nt]} \mathbb{E}_{i -
1}\left[\int_{(i-1)/n}^{i/n}\int_{(i-1)/n}^{t}(\tilde{L}_{r}-\tilde{L}_{(i-1)/n})d\tilde{L}_{r} \sigma_{s-} dt\right]\\
& = & n \sum_{i=1}^{[nt]} \mathbb{E}_{i -
1}\left[\int_{(i-1)/n}^{i/n}\int_{(i-1)/n}^{t}(\tilde{L}_{r}-\tilde{L}_{(i-1)/n})d\tilde{L}_{r}
\sigma_{(i - 1)/n} dt\right] + \epsilon_{4,n}\\ & = &
\epsilon_{4,n},
\end{eqnarray*}
by the Fubini's Theorem for the third part and the martingale
property of stochastic integral.

Finally, we prove that $\epsilon_{4,n} \stackrel{\mathbb{P}}
\rightarrow 0.$
\begin{eqnarray*}
& ~ & \mathbb{E}[|\epsilon_{4,n}|]\\
 & \leq & n \sum_{i=1}^{[nt]} \mathbb{E}
\left[\int_{(i-1)/n}^{i/n}|\int_{(i-1)/n}^{t}(\tilde{L}_{r}-\tilde{L}_{(i-1)/n})dL_{r}||\sigma_{s-}
- \sigma_{(i - 1)/n}|dt\right]\\
& \leq & n \sum_{i=1}^{[nt]}
\int_{(i-1)/n}^{i/n}\left[\mathbb{E}\left(\int_{(i-1)/n}^{t}(\tilde{L}_{r}-\tilde{L}_{(i-1)/n})d\tilde{L}_{r}\right)^{2}\right]^{\frac{1}{2}}\left[\mathbb{E}(\sigma_{s-}
- \sigma_{(i - 1)/n})^{2}\right]^{\frac{1}{2}}dt\\
& = & \frac{1}{\sqrt{n}} \cdot n \sum_{i=1}^{[nt]} \int_{(i-1)/n}^{i/n} \left[\mathbb{E}\int_{(i-1)/n}^{t}(\tilde{L}_{r}-\tilde{L}_{(i-1)/n})^{2} c_{r-} dr\right]^{\frac{1}{2}}dt\\
& \leq & K \frac{1}{\sqrt{n}} \cdot n \sum_{i=1}^{[nt]}
\int_{(i-1)/n}^{i/n} \left[\int_{(i-1)/n}^{t}(r - (i-1)/n) dr
\right]^{\frac{1}{2}}dt\\
& \leq & K \frac{1}{\sqrt{n}} t \rightarrow 0,
\end{eqnarray*}
by Cauchy-Schwarz inequality and (\ref{cc}) where $K$
denotes a constant.

For $\tilde{F}^{n,\varepsilon}$ part,  it can be obtained the similar procedure and Theorem \ref{kp1}. We omit it.

\section{Application and discussions}
\subsection{Application}
Now, we discuss the approximation of  Dol\'{e}ans-Dade exponential.  We present numerical method to solve
   \begin{equation*}   dX_t=X_{t-}dY_t.
\end{equation*}
We introduce the Milstein type method:
               \begin{eqnarray*}
                   X_t^n&=& X_{n(t)}^{n}+X^{n}_{n(t)}(Y_{t}-Y_{n(t)})\\
                        & & +X^{n}_{n(t)}\int_{n(t)}^{t}(Y_{s-}-Y_{n(s)})dY_s.\end{eqnarray*}
    where $n(t)=k/n$, if $k/n<t\le (k+1)/n$.     We want to study the weak convergence of $U^n$, where
                      $$U^n_t=X^n_{[nt]}-X_{[nt]}.$$

             We have the following theorem.

             \begin{theorem}\label{mr2}
             Under Assumption \ref{a1}, we have  the following:

 (a) If $c=0$, $nU^n$ weakly converge to 0.

 (b) If $c>0$, $nU^n$ weakly converge to $U$, where $U$ is the unique solution of the following linear equation:
   \begin{eqnarray*}
                 U_t&=&\int_{0}^{t}U_{s-}dY_s-\frac{\sqrt{6c^3}}{6}\int_{0}^{t}X_{s-}\sigma_{s-}^{3}dM_s\\
                       & &-c\sum_{n:T_n\le t}[\sqrt{\xi_n}K_{n}'(X_{T_{n}-})^2+(\sqrt{c(1-\xi_n)\xi_n}N_n'N_{n}''+\sqrt{1-\xi_n}K_{n}'')(X_{T_{n-}})^2]\sigma_{T_{n}-}^3\Delta Z_{T_n}.
                       \end{eqnarray*}
           \end{theorem}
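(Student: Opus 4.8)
The plan is to reduce Theorem \ref{mr2} to the already-proved Theorem \ref{mr1} by writing the scheme error as the solution of a linear stochastic equation driven by $Y$ whose source term is (essentially) the $X_{-}$-weighted iterated-integral error of (\ref{m}), and then to pass to the limit using the Kurtz--Protter stability result (Theorem \ref{kp1}). After invoking Lemma \ref{local} to assume the strengthened Assumption \ref{a2}, I would first expand the exact Dol\'{e}ans-Dade exponential on each grid interval $[a,i/n]$, $a=(i-1)/n$. Applying $dX_t=X_{t-}dY_t$ twice gives the exact identity
$$X_{i/n}=X_a+X_a(Y_{i/n}-Y_a)+X_a\int_a^{i/n}(Y_{s-}-Y_a)\,dY_s+\int_a^{i/n}\int_a^{s-}(X_{r-}-X_a)\,dY_r\,dY_s,$$
whose first three terms are exactly the Milstein scheme applied to $X_a$. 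Subtracting the scheme yields the one-step recursion $U^n_{i/n}=U^n_a\,\Phi_{i/n}+e_{i/n}$, with propagation factor $\Phi_{i/n}=1+(Y_{i/n}-Y_a)+\int_a^{i/n}(Y_{s-}-Y_a)\,dY_s$ and local error $e_{i/n}=-\int_a^{i/n}\int_a^{s-}(X_{r-}-X_a)\,dY_r\,dY_s$. Summing the recursion and scaling by $n$, I would recast $nU^n$ as the linear equation
$$nU_t^n=\int_0^t (nU^n)_{s-}\,dY_s+nG_t^n+\rho_t^n,$$
where $G_t^n=\sum_{i=1}^{[nt]}e_{i/n}$ accumulates the local errors and $\rho^n$ gathers the higher-order piece of $\Phi$; this last remainder is shown to vanish uniformly on $[0,1]$ by the same Doob/$L^2$ arguments as in Lemma \ref{3.8}.

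The heart of the matter is identifying the stable limit of the source $nG^n$. Since $X_{r-}-X_a=\int_a^{r-}X_{u-}\,dY_u$, on every interval carrying no large jump of $Z$ one has $X_{r-}-X_a\approx X_a(Y_{r-}-Y_a)$, so the continuous contribution of $nG^n$ is the $X_{-}$-weighted analogue of $n$ times the object $X^n$ of (\ref{m}). Running the computation of Lemma \ref{3.5} with the scaling $c_s=c\sigma_s^2$ used in Section 3.4, this part converges to $-\tfrac{\sqrt{6c^3}}{6}\int_0^t X_{s-}\sigma_{s-}^3\,dM_s$, with $M$ the conditionally Gaussian limit that is independent of $Z^c$. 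On the (asymptotically unique) interval containing a jump time $T_n$, the integrand acquires the increment $X_{T_n}-X_{T_n-}=X_{T_n-}\sigma_{T_n-}\Delta Z_{T_n}$, and the three possible placements of the large jump (outer integral, inner differential, or integrand value) reproduce exactly the $\alpha^n,\beta^n,\gamma^n$ decomposition of Lemma \ref{3.6} now carrying the weight coming from $X_{T_n-}$. Combining these with the accumulated $X_{-}$-weighting from the propagation factors and passing to the joint stable limit as in Lemma \ref{3.7} produces the discrete sum $-c\sum_{T_n\le t}[\sqrt{\xi_n}K_n'+\sqrt{c(1-\xi_n)\xi_n}N_n'N_n''+\sqrt{1-\xi_n}K_n'']\,(X_{T_n-})^2\sigma_{T_n-}^3\Delta Z_{T_n}$; together with the continuous part this gives $nG^n\stackrel{stably}{\Rightarrow}V$, where $V$ is precisely the sum of the last two terms in the statement.

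With the source limit in hand I would invoke the stability of solutions of linear stochastic equations (Theorem \ref{kp1}): the driving process $Y$ is fixed in $n$, hence its decomposition satisfies the uniform-controlled-variation/tightness hypothesis trivially, the integrand $(nU^n)_{-}$ converges appropriately, and $nG^n+\rho^n\Rightarrow V$. Since the solution map of a linear equation driven by a good integrator is continuous for the Skorokhod topology, we conclude $(Y,nU^n)\Rightarrow(Y,U)$, where $U$ solves $U_t=\int_0^t U_{s-}\,dY_s+V_t$. This is a linear (Dol\'{e}ans-Dade type) equation with a given semimartingale source, so existence and uniqueness—and the explicit representation via the variation-of-constants formula and Gronwall's inequality—are standard. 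For part (a), Theorem \ref{mr1}(a) forces $V\equiv0$ when $c=0$, and the unique solution of $U=\int U_{s-}\,dY_s$ started at $0$ is $U\equiv0$, giving $nU^n\Rightarrow0$.

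The \textbf{main obstacle} is the second step: establishing the joint stable convergence of $nG^n$, and in particular the exact bookkeeping on the jump intervals. One must carefully split the integrand $X_{r-}-X_a$ into its continuous increment and its jump $X_{T_n-}\sigma_{T_n-}\Delta Z_{T_n}$, track how these combine with the before/after Brownian increments $(Z^c_{T_n}-Z^c_{T_-(n,j)})$ and $(Z^c_{T_+(n,j)}-Z^c_{T_n})$ and with the accumulated $X_{-}$-weighting, and verify that the compensated squares yield the $K_n'$, $K_n''$ terms while the genuinely mixed term yields $N_n'N_n''$ with the correct $\sqrt{\xi_n}$, $\sqrt{1-\xi_n}$ factors. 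This is exactly the delicate computation of Lemmas \ref{3.5}--\ref{3.8}, carried over from $Z$ to $Y=\int\sigma_{-}\,dZ$ and weighted by the Dol\'{e}ans-Dade exponential, and it is where the quadratic factor $(X_{T_n-})^2$ in the jump terms must be produced; the remaining steps are then routine consequences of Theorems \ref{mr1} and \ref{kp1}.
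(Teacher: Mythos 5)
Your overall architecture is essentially the paper's: rewrite the normalized scheme error as a linear equation driven by $Y$ whose source term is an $X_{-}$-weighted version of the iterated-integral error (\ref{m}), identify the stable limit of that source with the machinery of Lemmas \ref{3.5}--\ref{3.8} (with $c_s=c\sigma_s^2$), and conclude by the stability theorem for linear equations quoted from Jacod and Protter; the paper does exactly this via the source approximation $R^{n,\varepsilon}_t=\int_0^t X_s\,d(M^{n,\varepsilon}+F^{n,\varepsilon})_s$ and the comparison equation $dV^{n,\varepsilon}_s=V^{n,\varepsilon}_{s-}dZ_s-dR^{n,\varepsilon}_s$. Your algebra differs only in that you expand the true solution twice instead of expanding the scheme, which is a harmless variant.

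The genuine gap is at the decisive step: the identification of the jump part of the source limit. Your own decomposition gives $e_{i/n}=-\int\int(X_{r-}-X_a)\,dY_r\,dY_s$ with $X_{r-}-X_a\approx X_a(Y_{r-}-Y_a)$ plus the jump increment $X_{T_n-}\sigma_{T_n-}\Delta Z_{T_n}$; in each of the three placements of the large jump the surviving weight is a \emph{single} factor $X_{T_n-}$ (times $\sigma^3_{T_n-}\Delta Z_{T_n}$ and the $\alpha,\beta,\gamma$ variables of Lemma \ref{3.6}), just as in the paper, where Theorem \ref{kp1} applied to $R^{n,\varepsilon}$ and Lemma \ref{3.7} yields $\int X_{s-}\,dF^{\varepsilon}_s=\sum_j X_{T_j-}(\alpha_j+\beta_j+\gamma_j)$. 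Nothing in your derivation can produce a second factor of $X_{T_n-}$: the propagation factors $\Phi_{i/n}$ feed exclusively into the homogeneous term $\int (nU^n)_{s-}\,dY_s$, not into the source, so the phrase ``combining these with the accumulated $X_{-}$-weighting from the propagation factors'' is an assertion, not a derivation, and it is precisely the point where the claimed quadratic factor $(X_{T_n-})^2$ would have to appear. Your computation, carried out honestly, gives a linear factor $X_{T_n-}$ (the paper's own proof displays a mixed linear/quadratic expression at this very point, so its statement cannot be matched this way either); the step as written would fail. A secondary, repairable issue: your remainder $\rho^n_t=\sum_i (nU^n_{(i-1)/n})\int_{(i-1)/n}^{i/n}(Y_{s-}-Y_{(i-1)/n})\,dY_s$ contains the unknown process $nU^n$ itself, so it cannot be dispatched by ``the same Doob/$L^2$ arguments as in Lemma \ref{3.8}'' without circularity; the clean fix is to keep it inside the equation, writing $nU^n_t=nG^n_t+\int_0^{[nt]/n}(nU^n)_{n(s)}\bigl(1+Y_{s-}-Y_{n(s)}\bigr)dY_s$, and then invoke the quoted Jacod--Protter theorem on linear equations with $H^n_s=1+Y_{s-}-Y_{n(s)}\rightarrow 1$, rather than Theorem \ref{kp1}, which concerns convergence of integrals, not of solutions of equations.
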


           Before proving this theorem, we need the following theorem, which can help us to make connection Theorem  \ref{mr1} and \ref{mr2}.

Consider
$$X^n_t=J^n_t+\int_0^tX_{s-}^nH_{s}^ndY_s$$
 where $Y$ is a given semimartingale, $(J^n)_{n\ge 1}$ is a sequence of adapted c\'{a}dl\'{a}g processes and $(H^n)_{n\ge 1}$ is a sequence of predictable processes.

     \begin{theorem} (Jacod and Protter \cite{jp}.)     Let  $V_t^n=\int_0^tH_s^ndY_s$ . Suppose $\sup_{s\in [0,1]}|H_s^n|$  is tight, and
           $$(J^n,V^n,\rho^n)\stackrel{stably}{\Rightarrow} (J,V,\rho)$$
   on some extension of the space. Then $V$ is a semimartingale on the extension, and
               $$(J^n,V^n,X^n,\rho^n)\stackrel{stably}{\Rightarrow} (J,V,X,\rho)$$
   where $X$ is the unique solution of
   $$X_t=J_t+\int_0^tX_{s-}dV_s.$$    \end{theorem}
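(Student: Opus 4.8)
The plan is to pass to the limit in the linear equation $X^n_t = J^n_t + \int_0^t X^n_{s-}\,dV^n_s$ by combining the Kurtz--Protter continuity theorem (Theorem \ref{kp1}) with a subsequence-and-uniqueness argument, the only genuinely new input being a verification that the driving sequence $V^n$ is \emph{good} in the sense required by Theorem \ref{kp1}. First I would record that the limit equation $X_t = J_t + \int_0^t X_{s-}\,dV_s$ admits a unique semimartingale solution on the extension: since $V$ is a semimartingale there, this is standard linear SDE theory (existence and uniqueness via Picard iteration, with the solution expressible through the Dol\'{e}ans--Dade exponential of $V$), so the object $X$ appearing in the statement is well defined.

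Next I would check that $V^n = \int_0^{\cdot} H^n_s\,dY_s$ satisfies the tightness hypothesis of Theorem \ref{kp1}. Writing the canonical decomposition $Y = Y_0 + A_Y(\delta) + M_Y(\delta) + \sum_{s\le\cdot}\Delta Y_s 1_{\{|\Delta Y_s|>\delta\}}$, the integrator $V^n$ decomposes correspondingly, its jumps being $H^n_s\Delta Y_s$. Because $Y$ is a fixed semimartingale and $\sup_{s\le 1}|H^n_s|$ is tight by hypothesis, each of the three control quantities
$$\langle M_{V^n}(\delta), M_{V^n}(\delta)\rangle_1 + \int_0^1 |dA_{V^n}(\delta)_s| + \sum_{0\le s\le 1} \Delta V^n_s 1_{\{|\Delta V^n_s|>\delta\}}$$
is dominated, up to the usual truncation adjustments of the large-jump threshold, by $\sup_{s\le 1}|H^n_s|$ times a fixed finite functional of $Y$; hence the sum is tight. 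This is precisely the statement that uniform control of variations is preserved under integration against integrands with tight supremum, and it makes $V^n$ a good sequence.

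With goodness in hand I would run a subsequence argument. First establish tightness of $(X^n)$ in the Skorokhod space: for a linear equation driven by a sequence satisfying the above control, the \'Emery/Gronwall estimates furnish a uniform bound $\sup_n \mathbb{E}[\sup_{t\le 1}|X^n_t|^p] < \infty$ together with a modulus-of-continuity control, which yields tightness of the quadruple $(J^n,V^n,X^n,\rho^n)$. Along any subsequence with a stable limit $(J^n,V^n,X^n,\rho^n)\stackrel{stably}{\Rightarrow}(J,V,\bar X,\rho)$, the integrand $X^n_-$ converges to $\bar X_-$ and the integrator $V^n$ is good, so Theorem \ref{kp1} gives the joint convergence $(X^n_-,V^n,\int X^n_-\,dV^n)\Rightarrow(\bar X_-,V,\int \bar X_-\,dV)$. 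Passing to the limit in $X^n = J^n + \int X^n_-\,dV^n$, and using Proposition \ref{pro1} to carry $\rho^n$ through, shows $\bar X = J + \int \bar X_-\,dV$; by uniqueness of the limit equation $\bar X = X$. Since every subsequential limit coincides with $(J,V,X,\rho)$, the full sequence converges stably, which is the assertion.

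The main obstacle is the implicit, self-referential nature of the equation: $X^n$ appears on both sides, so it is not a fixed continuous functional of quantities already known to converge, and one cannot simply invoke a continuous mapping theorem. The work is concentrated in (i) the a priori tightness estimate for $X^n$, which is what permits extracting convergent subsequences in the first place, and (ii) verifying that stable convergence, not merely weak convergence, is preserved when closing the loop, which requires carrying the auxiliary process $\rho^n$ and the extension of the probability space coherently through the Kurtz--Protter limit. Granting these two points, uniqueness of the limit SDE upgrades the subsequential statement to convergence of the whole sequence.
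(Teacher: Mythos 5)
The paper does not actually prove this statement: it is imported verbatim from Jacod and Protter \cite{jp}, where it is established through the Kurtz--Protter theory of ``good'' (uniformly tight) integrator sequences. So your proposal can only be measured against that standard argument, and your overall strategy --- verify that $V^n$ is a good sequence, obtain tightness of the solutions $X^n$, extract a convergent subsequence, pass to the limit in the equation, and conclude by uniqueness of the linear SDE --- is indeed the route taken in the literature. The goodness verification (jumps of $V^n$ are $H^n\Delta Y$, control quantities dominated by $\sup_s|H^n_s|$ times fixed functionals of $Y$) and the existence/uniqueness of the limit equation via the Dol\'eans--Dade exponential are sound.

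The loop-closing step, however, has a genuine gap as written. Theorem \ref{kp1}, which you invoke to conclude $\int_0^{\cdot} X^n_{s-}\,dV^n_s \Rightarrow \int_0^{\cdot}\bar X_{s-}\,dV_s$, requires $\sup_{t\in[0,1]}|H^n_t-H_t|\stackrel{\mathbb{P}}{\rightarrow}0$, i.e.\ uniform convergence \emph{in probability} of the integrand. Your subsequence argument delivers only weak (stable) convergence of $X^n$ in the Skorokhod topology; worse, $\bar X$ lives on an extension of the space, so the in-probability comparison with $X^n$ is not even meaningful. A Skorokhod representation cannot repair this, since changing the space destroys the filtration and semimartingale structure underlying the stochastic integrals. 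What actually closes the argument is the stronger, joint-weak-convergence form of the Kurtz--Protter theorem in \cite{kp} (also due to Jakubowski, M\'emin and Pag\`es): if $(H^n,V^n)\Rightarrow(H,V)$ \emph{jointly} in the Skorokhod topology and $(V^n)$ is uniformly tight, then $\bigl(H^n,V^n,\int H^n_-\,dV^n\bigr)\Rightarrow\bigl(H,V,\int H_-\,dV\bigr)$. That result is available, but it is not the Theorem \ref{kp1} stated in this paper, and the distinction is precisely where the difficulty of the theorem lies. A secondary gap: your a priori estimate $\sup_n\mathbb{E}[\sup_{t\le 1}|X^n_t|^p]<\infty$ is unjustified, since neither $J^n$ nor $Y$ is assumed to have any moments; tightness of the solutions must instead be extracted from the uniform tightness property by localization and stopping arguments, which is how Kurtz and Protter handle it.
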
.

           \begin{proof}

        In fact,
 \begin{eqnarray*}
 U^{n}_{t}&=& \sum_{i=1}^{[nt]}\int_{(i-1)/n}^{i/n}X_{(i-1)/n}^{n}dY_{s}\\
 & &+\sum_{i=1}^{[nt]}X_{(i-1)/n}^{n}\int_{(i-1)/n}^{i/n}(Y_{s-}-Y_{(i-1)/n})dY_{s}-\sum_{i=1}^{[nt]}\int_{(i-1)/n}^{i/n}X_{s-}dY_{s}\\
 &=&  \sum_{i=1}^{[nt]}\int_{(i-1)/n}^{i/n}[X_{s-}^{n}-X_{s-}]dY_{s}-\sum_{i=1}^{[nt]}\int_{(i-1)/n}^{i/n}[X_{s-}^{n}-X_{(i-1)/n}^{n}]dY_{s}\\
 & & +\sum_{i=1}^{[nt]}X_{(i-1)/n}^{n}\int_{(i-1)/n}^{i/n}(Y_{s-}-Y_{(i-1)/n})dY_{s} \end{eqnarray*}
 and

 \begin{eqnarray*}
 & & \sum_{i=1}^{[nt]}X_{(i-1)/n}^{n}\int_{(i-1)/n}^{i/n}(Y_{s-}-Y_{(i-1)/n})dY_{s}-\sum_{i=1}^{[nt]}\int_{(i-1)/n}^{i/n}(X_{s-}^{n}-X_{(i-1)/n}^{n})dY_{s}\\
 &=&-\sum_{i=1}^{[nt]}X_{(i-1)/n}^{n}\int_{(i-1)/n}^{i/n}\int_{(i-1)/n}^{s-}(Y_{r-}-Y_{(i-1)/n})dY_{r}dY_{s} .\end{eqnarray*}

      Thus
       \begin{eqnarray*}
 U^{n}_{t}&=&\sum_{i=1}^{[nt]}\int_{(i-1)/n}^{i/n}[X_{s-}^{n}-X_{s-}]dY_{s}\\
 & &-\sum_{i=1}^{[nt]}X_{(i-1)/n}^{n}\int_{(i-1)/n}^{i/n}\int_{(i-1)/n}^{s-}(Y_{r-}-Y_{(i-1)/n})dY_{r}dY_{s} . \end{eqnarray*}

 Set
    $$R_t^{n,\varepsilon}=\int_{0}^{t}X_sd(M^{n,\varepsilon}+F^{n,\varepsilon})_s,$$

     Introduce the following equation,
          \begin{eqnarray*}
 dV^{n,\varepsilon}_{s}&=&V^{n,\varepsilon}_{s-}dZ_{s}-dR_s^{n,\varepsilon}.\end{eqnarray*}

By Lemma 2.4 in Jacod and Protter \cite{jp},

  \begin{eqnarray*}
  \mathbb{P}(\sup_{0\le t\le 1}|V^{n,\varepsilon}_{t}- V^{n}_{t}|>\rho)&\le& \rho'+ \mathbb{P}(\sup_{0\le t\le 1}|X_t|>A_1)\\
  & &+\mathbb{P}(\sup_{0\le t\le 1}|R_t^{n,\varepsilon}|>A_2)+\mathbb{P}(\sup_{0\le t\le 1}|R_t^n-R_t^{n,\varepsilon}|>\omega_1)+\frac{\omega_1}{\rho}K_{A_1,\rho'}.\end{eqnarray*}
      By Lemma \ref{3.5}, \ref{3.7}, \ref{3.8},  we can obtain
           $$\lim_{\varepsilon\rightarrow 0}\limsup_{n} \mathbb{P}(\sup_{0\le t\le 1}|V^{n,\varepsilon}_{t}- V^{n}_{t}|>\rho)=0.$$
      By the weak convergence of stochastic integral and stability of stochastic differential equations,
     the limiting processes of $V^{n,\varepsilon}$ is the solution of
       \begin{eqnarray*}
                 V_t&=&\int_{0}^{t}V_{s-}dZ_s-\frac{\sqrt{6c^3}}{6}\int_{0}^{t}X_{s-}dM_s\\
                       & &-c\sum_{n:T_n\le t}[\sqrt{\xi_n}K_{n}'X_{T_{n}-}+(\sqrt{c(1-\xi_j)\xi_j}N_j'N_{j}''+\sqrt{1-\xi_n}K_{n}'')X_{T_{n-}}^2]\Delta Z_{T_n}
                     \end{eqnarray*}
     where  $M$ is standard Brownian motion, which are independent from $Z^c$,
               The final version of Theorem \ref{mr1} can be extended from $V^{n,\varepsilon}$ and $V$ via the discretization  and weak convergence of  stochastic integrals.

  \end{proof}

  \subsection{Discussion}
  When we consider  a general stochastic differential equation (SDE) with the form:
             \begin{equation}\label{sde}
                   X_t=x_0+\int_{0}^{t}f(X_s)dY_s,\end{equation}
where $f$ denotes a $C^3$ (three times differentiable) function, and $Y$ is semimartingale.    We solve this SDE numerically by means of the Milstein method,
  \begin{eqnarray*}
                   X_t^n&=& X_{n(t)}^{n}+f(X^{n}_{n(t)})(Y_{t}-Y_{n(t)})\\
                        & & +f(X^{n}_{n(t)})f'(X^{n}_{n(t)})\int_{n(t)}^{t}(Y_{s-}-Y_{n(s)})dY_s.\end{eqnarray*}
                We want to study the weak convergence of
                   $$U^n_t=X^n_{[nt]}-X_{[nt]}.$$
                   Similar to the previous study,
                          \begin{eqnarray*}
 U^{n}_{t}&=& \sum_{i=1}^{[nt]}\int_{(i-1)/n}^{i/n}f(X_{(i-1)/n}^{n})dY_{s}\\
 & &+\sum_{i=1}^{[nt]}f'(X_{(i-1)/n}^{n})f(X_{(i-1)/n}^{n})\int_{(i-1)/n}^{i/n}(Y_{s-}-Y_{(i-1)/n})dY_{s}\\
 & &-\sum_{i=1}^{[nt]}\int_{(i-1)/n}^{i/n}f(X_{s-})dY_{s}\\
 &=&  \sum_{i=1}^{[nt]}\int_{(i-1)/n}^{i/n}[f(X_{s-}^{n})-f(X_{s-})]dY_{s}-\sum_{i=1}^{[nt]}\int_{(i-1)/n}^{i/n}[f(X_{s-}^{n})-f(X_{(i-1)/n}^{n})]dY_{s}\\
 & & +\sum_{i=1}^{[nt]}f'(X_{(i-1)/n}^{n})f(X_{(i-1)/n}^{n})\int_{(i-1)/n}^{i/n}(Y_{s-}-Y_{(i-1)/n})dY_{s}\\
 &=&  \sum_{i=1}^{[nt]}\int_{(i-1)/n}^{i/n}[f(X_{s-}^{n})-f(X_{s-})]dY_{s}\\
 & & + \sum_{i=1}^{[nt]}f'(X_{(i-1)/n}^{n})f(X_{(i-1)/n}^{n})\int_{(i-1)/n}^{i/n}(Y_{s-}-Y_{(i-1)/n})dY_{s}\\
 & &-\sum_{i=1}^{[nt]}\int_{(i-1)/n}^{i/n}f'(X_{(i-1)/n}^{n})(X_{s-}^{n}-X_{(i-1)/n}^{n})dY_{s}\\
 & &-\frac{1}{2}\sum_{i=1}^{[nt]}\int_{(i-1)/n}^{i/n}f''(\tilde{X}_{(i-1)/n}^{n})(X_{s-}^{n}-X_{(i-1)/n}^{n})^2dY_{s}
 \end{eqnarray*}
 where $\tilde{X}_{(i-1)/n}^{n}$ is  random variable between $X_{(i-1)/n}^{n}$  and  $X_{s}^{n}$.
    When $f''(x)\neq 0$, we need to study the asymptotic properties of

             \begin{eqnarray*}
 & &\sum_{i=1}^{[nt]}\int_{(i-1)/n}^{i/n}f''(\tilde{X}_{(i-1)/n}^{n})(X_{s-}^{n}-X_{(i-1)/n}^{n})^2dY_{s}\\
 &=& \sum_{i=1}^{[nt]}f''(\tilde{X}_{(i-1)/n}^{n})(f(X_{(i-1)/n}^{n}))^{2}\int_{(i-1)/n}^{i/n}(Y_{s-}-Y_{(i-1)/n})^2dY_{s}\\
 & &+ \sum_{i=1}^{[nt]}f''(\tilde{X}_{(i-1)/n}^{n})(f(X_{(i-1)/n}^{n})f'(X_{(i-1)/n}^{n}))^{2}\int_{(i-1)/n}^{i/n}(\int_{(i-1)/n}^{s-}(Y_{r-}-Y_{(i-1)/n})dY_{r})^2dY_{s}.\end{eqnarray*}

 Similar to the previous discussion, the weak convergence of
        $$n\sum_{i=1}^{[nt]}\int_{(i-1)/n}^{i/n}(Z_{s-}-Z_{(i-1)/n})^2dZ_{s},$$
  is important.
   it has to discuss the weak convergence of
       $$\sum_{T_j\le t}n(\Delta Z_{T_j})^2(Z_{T_+(n,j)}^{c}-Z_{T_j}^{c}).$$
Unfortunately,    when the normalized rate is $n$, this term does not converges weakly.  In the future, we will study the rate of convergence of this term.

    \bibliographystyle{amsplain}

\begin{thebibliography}{10}

\bibitem{ald}
\textsc{Aldous, D., and G. K. Eagleson} On mixing and stability of
limit theorems. {\em Annals of Probability}, (1978), \textbf{6},
325-331.

\bibitem{f}
\textsc{Fukasawa, M} Discretization error of stochastic integrals.
\textit{ The Annals of Applied Probability}, (2011), 21, 1436-1465.

\bibitem{hm}
\textsc{Hayashi, T. } and \textsc{ Mykland, P.} Evaluating hedging
errors: An asymptotic approach.  \textit{ Math. Finance}, (2005),
15, 309-343.

\bibitem{lr}
\textsc{ Lindberg, C.} and \textsc{Rootz\'en, H.} Error
distributions for random grid approximations of multidimensional
stochastic integrals. \textit{ The Annals of Applied Probability},
(2013), 23, 834-857.

\bibitem{j3}
\textsc{Jacod, J.} On continuous conditional Gaussian martingales
and stable convergence in law. \textit{ S\'{e}minaire de
Probabilit\'{e}s}, (1997), XXXI, 232-246.

\bibitem{j4}
\textsc{Jacod, J.} On processes with conditional independent
increments and stable convergence in law. \textit{ S\'{e}minaire de
Probabilit\'{e}s}, (2003), XXXVI, 383-401.

\bibitem{ja}
\textsc{Jacod, J.} {\em Statistics and high-frequency data.} In:
Statistical Methods for Stochastic Differential Equations, CRC Boca
Raton-London-New York, 191-310, (2012).

\bibitem{j}
\textsc{Jacod, J.}  The Euler scheme for L\'{e}vy driven stochastic
differential equations: limit theorems. \textit{ Ann. Probab},
(2004), 32, 1830-1872.

\bibitem{jp}
\textsc{Jacod, J.} and \textsc{Protter, P.} Asymptotic error
distributions for the Euler methods for  stochastic differential
equations. \textit{Ann. Probab.}, (1998), 26, 267-307.

\bibitem{jpb}
\textsc{Jacod, J.} and \textsc{Protter, P.} {\em Discretization of Processes. } Springer, Heidelberg, (2012).
\bibitem{js}
\textsc{Jacod, J.} and \textsc{ Shiryaev, A. } {\em Limit Theorems
for Stochastic Processes}, 2nd eds. {\em Grundlehren der
Mathematischen Wissenschaften} \textbf{288}. Springer,
Berlin-Heidelberg-New York-Hong Kong-London-Milan-Paris-Tokyo,
(2003).

\bibitem{kp}
\textsc{Kurtz, T.} and \textsc{Protter, P.} Weak limit theorems for
stochastic integrals and stochastic differential equations.
\textit{The Annals of Probability}, (1991), 19, 1035-1070.

\bibitem{re}
\textsc{R\'{e}nyi, A} On stable sequence of events.
\textit{Sanky\={a}, Ser. A}, (1963), 25, 293-302.

\bibitem{r}
\textsc{Rootz\'en, H} The economic and statistical role of jumps to
interest rates. {\em Ann. Probab.}, (1980), 8, 241-251.

\bibitem{tv}
\textsc{Tankov, P.} and \textsc{ Voltchkova. E.} Asymptotic analysis
of hedging errors in models with jumps.\textit{Stochastic Process.
Appl.}, (2009), 119, 2004-2027.

\bibitem{w}
\textsc{Wang, H} The Euler scheme for a stochastic differential
equation driven by pure jump semimartingale. \textit{Journal of
Applied Probability}, (2015), 52, 149-166.

\bibitem{y}
\textsc{Yan, L.} Asymptotic error for the Milstein scheme for SDEs
driven by continuous semimartingales.  \textit{The Annals of Applied
Probability}, (2005), 15, 2706-2738.

\bibitem{zs}
\textsc{Zhou, L.} and \textsc{Su, Z.} Discretization error of
irregular sampling approximations of stochastic
integrals.\textit{Applied Mathematics-A Journal of Chinese
Universities.}, (2016), 31, 96-106.




\end{thebibliography}

 \end{document}